\documentclass[12pt]{article}
\usepackage[T1]{fontenc}
\usepackage[utf8]{inputenc}
\usepackage{lmodern}
\usepackage{microtype}
\usepackage{amsthm}
\usepackage[titletoc, title]{appendix}
\usepackage{indentfirst}
\usepackage{subfigure}
\usepackage{enumitem}
\usepackage{amssymb}
\usepackage{amsmath,amsthm,marvosym,wasysym,mathdots}
\usepackage[natbibapa,nodoi]{apacite}
\usepackage{color}
\usepackage{setspace}
\usepackage{hyperref}
\usepackage{mathrsfs}
\usepackage{amsmath}
\usepackage{graphicx}
\usepackage{ae,aecompl}
\usepackage{tikz,pgfplots}
\usepackage{pgf-pie}
\usepackage[multiple]{footmisc}
\usepackage{mathtools}                      
\usepackage{commath}
\mathtoolsset{showonlyrefs=true}            
\usepackage[normalem]{ulem}

\hypersetup{%
  colorlinks=false,
  pdfborder = {0 0 0.5 [3 0]}
}
\usepackage{breakurl}


\usepackage{multirow}
\usepackage[left=1in,right=1in,top=1in,bottom=1in]{geometry}

\pgfplotsset{%
    every tick label/.append style={
        font=\tiny,
        /pgf/number format/.cd,
        fixed,
        precision=2,
        /tikz/.cd
    },%
    legend style={font=\small},
    scaled ticks=false,%
}
\usetikzlibrary{external}
\usepackage{shellesc}
\immediate\write18{mkdir tikz}

\usepackage[margin=1cm]{caption}

\setcounter{MaxMatrixCols}{10}

 %
 %


\newtheorem{theorem}{Theorem}
\newtheorem*{theorem*}{Theorem}

\newtheorem{lemma}[theorem]{Lemma}

\newtheorem{proposition}[theorem]{Proposition}
\newtheorem{remark}[theorem]{Remark}



\def\R{{\mathbb R}} 
\newcommand{\EE}{{\mathord{I\kern -.33em E}}}
\def\E{{\mathbb E}} 

\def\1{1{\hskip -3.3 pt}\hbox{I}}


\providecommand{\varitem}{}
\makeatletter

\makeatother

\numberwithin{equation}{section}
\numberwithin{theorem}{section}

\newcommand{\Rnneg}{\R_{\geq 0}}

\newcommand{\rewardr}{\lambda}
\newcommand{\meanhashr}{\bar{\alpha}}
\newcommand{\opthashr}{\alpha^*}
\newcommand{\eqmeanhashr}{\expandafter\bar\opthashr}
\newcommand{\costr}[1]{c#1}

\newcommand{\lop}{\mathcal{L}}

\newcommand{\K}{\mathcal{K}}

\begin{document}

\title{A Mean Field Games Model for Cryptocurrency Mining}

\author{
Zongxi Li%
\thanks{ORFE Department, Princeton University, Princeton, USA.}
\and
A.\ Max Reppen%
\thanks{Questrom School of Business
 Boston University,
 Boston, MA, 02215, USA.
 Partly supported by the Swiss National Science Foundation grant SNF 181815.}
\and
Ronnie Sircar%
\footnotemark[1]
}

\date{December 2019; revised January 14, 2022}

\maketitle

\begin{abstract}
 We propose a mean field game model to study the question of how centralization of reward and computational power occur in Bitcoin-like cryptocurrencies.
    Miners compete against each other for mining rewards by increasing their computational power.
    This leads to a novel mean field game of jump intensity control, which we solve explicitly for miners maximizing exponential utility, and handle numerically in the case of miners with power utilities.
   We show that the heterogeneity of their initial wealth distribution leads to greater imbalance of the reward distribution, and increased wealth heterogeneity over time, 
   or a ``rich get richer'' effect.
    This concentration phenomenon is aggravated by a higher bitcoin mining reward, and reduced by competition.
    Additionally, an advantaged miner with cost advantages such as access to cheaper electricity, contributes a significant amount of computational power in equilibrium, unaffected by competition from less efficient miners.
    Hence, cost efficiency can also result in the type of centralization seen among miners of cryptocurrencies.
\end{abstract}

\section{Introduction}
\label{Bitcoin mining}
Blockchain technologies solve the elusive problem of creating a decentralized ledger.
Proponents have made the case that a payment and banking system founded on blockchain technologies could conceivably allay data privacy concerns, and provide wider access to financial services.
Various forms of digital currency solutions have been developed in the past, but Bitcoin is the famous realization that finally created a secure distributed ledger (see \cite{Nakamoto2008}).
Since its creation in January 2009, its fame and popularity have grown rapidly.
The supply of bitcoins is constantly growing, but artificially limited to $21$ million, of which almost $19$ million are in circulation now. 

In a cryptocurrency network following a so-called proof-of-work protocol, independent ``miners'' compete for the right to record the next transaction block on the blockchain.
They do so by solving computational puzzles: once a miner obtains a solution, the corresponding block is added on top of the blockchain, and the miner obtains a reward.
The computational puzzle is designed such that there is no known better way of solving it than brute force calculation. 
In other words, the chance of getting the reward is proportional to the computational power or the hash rates that miners  contribute. 
The purpose of the puzzle is to disincentivize bad behavior by forcing miners to provide proof-of-work that some effort has been exerted -- electricity paid. 
Moreover, the difficulty of the puzzle varies to maintain a consistent solving time, for example 10 minutes.

The empirical analysis in \cite{Kondor2014} shows that the accumulation of bitcoins tends to occur among a small number of miners, which suggests centralization in the market.
This raises the following questions: 
What incentives drive miners' competitive behavior?
How does the centralization of the reward occur in a decentralized mining environment?
What factors impact this centralization? 

Another emerging phenomenon is major miners who have access to cheaper electricity or more efficient hardware and thus are cost-advantaged.
Bitmain controls AntPool and BTC.com and accounts for around $33\%$ of the total hash rate in the world as of June 2019 (see Figure \ref{fig: Bitcoin hash rate distribution.}).
This number includes Bitmain's computational power as well as that of miners who join the pools.
By rough estimates, Bitmain accounted for about $4.5\%$ of all mining power for Bitcoin mining in October 2018.\footnote{\url{https://web.archive.org/web/20181017133438/https://blog.bitmain.com/en/hashrate-disclosure/}}\footnote{\url{https://btc.com/stats/diff}}
Moreover, as \cite{Taylor2017} points out, some mining entities also develop application-specific integrated circuits (ASICs) and create related data centers with low energy cost.
One may ask: What leads to this centralization of computational power? What advantages do some miners have?

We are interested in understanding and modeling the interaction between cryptocurrency miners, the consequent evolution of wealth inequality among them, and the potential centralization of mining power.
In the context of cryptocurrency mining, an economic model captures the cost-reward structure of mining: the cost -- typically the marginal cost of electricity, and the potential reward -- the units of currency rewarded to successful miners.

An ongoing regulatory issue is as to whether cryptocurrencies are currencies or commodities (or even share-like assets offered in initial coin offerings).
The Commodity Futures Trading Commission (CFTC) in the US classifies them as commodities, and their electronic structure of production mirrors the uncertainty and language of mining exhaustible resources. The latter connects us to game theoretic models that have been developed to try and explain energy production from various sources many of which, like oil, are in finite supply (see, for instance, \cite{fields_survey}). 

Similarly to models of natural resource extraction, the miners are producers, but the product is numbers.
These numbers, called hashes, come from processing transaction data.
Because of the demand for processing, there is effectively also a demand for hashes.
So, just like in any other market, the suppliers (miners) are competing to fill this demand.
And as in other markets, the (expected) reward per hash obtained is diminishing in the aggregate supply (mining power).
However, calling it a market for hashes is masking the actual mechanism, because it is not the numbers that are valuable.
Instead, the value lies in the process that `extracts' the numbers (mining), and the numbers themselves are just a byproduct. 
Therefore the basis of the models introduced in this paper is the probabilistic relation between competitive hashing and cryptocurrency discovery.

Our paper quantifies the competition between miners by adopting a tractable mean field games approach.
The idea behind mean field games is that, with many participants, any particular player has little impact on any other player when they interact through the mean of their actions---hashing in our case.
As the number of players grows,
one can first view an individual's decision making problem as being against a mean field competition, knowing that their individual contribution to the mean field is infinitesimally small.
The final step is a fixed point condition in which the mean optimal control should coincide with the aforementioned mean field.
This approach leads to a computationally tractable model, which is
intuitively a good approximation to the finite player game, precisely because each player's impact dissipates in the mean of many players.

\subsection{Related literature}
Our work is related to a growing literature on cryptocurrencies.
A game-theoretic model is developed in \cite{Easley2019} to show the emergence of transaction fees in the Bitcoin payment system. \cite{Brunnermeier2018} point out the blockchain trilemma, and analyze when decentralized record-keeping is economically beneficial. \cite{Xiong2018} explore a model to study initial coin offerings for new decentralized digital platforms. \cite{He2019} argue that the blockchain facilitates the creation of smart contracts, which can sustain market equilibria with a larger range of economic outcomes. \cite{Biais2019} use a stochastic game to show that the proof-of-work protocol results in multiple equilibria, some of which can lead to persistent divergence between chains.
A revenue management problem in the context of bitcoin selling is studied in \cite{hotelling_to_nakamoto}.
Our work differs from these studies in that we analyze centralization of both the reward and computational power in mining activities
as well as how the reward size and competition impact it. 

Our work is most closely related to recent literature on miners' strategic behavior and the centralization of mining. \cite{NBERw25592} examine mining pools' fee-setting decisions, and their unexpected impact on risk sharing and concentration. \cite{Weinberg2018} consider asymmetric costs among miners and show that lower cost leads to higher market share. On the other hand, \cite{Capponi2019} explore a two-stage mining game consisting of research and development and then competition. They explain how the arms race leads to asymmetric costs and mining centralization.
\cite{garratt2020fixed} study the importance of the cost structure in cryptocurrency mining, and they find that fixed costs improves the resilience to shocks due to its impact on exit and entry.

In contrast to the static games in these papers, our work considers continuous time mean field games, incorporating dynamically evolving wealth and mining decisions.
We refer to \cite{gueant2011mean} for an early introductory exposition on mean field games.
\cite{bertucci2020mean} provide another example of mean field game modeling of cryptocurrency mining.
Their model complements ours, as they focus on the blockchain's resilience to outside attacks, whereas ours focuses on centralization and inside attacks.
Their modeling decisions and approach are therefore very different, in order to capture the phenomena of importance in that setting.

Our work also contributes to the literature on intensity control of jump processes. It is used in the model for exploration of natural resources. \cite{Pliska1980} and \cite{Arrow1982} study the optimal consumption rule of a natural resource. They use a point process to model the uncertainty of the discoveries for new sources of supply, where the control is exploration effort. Later on, \cite{Soner1985} considered a similar model with holding cost, and established the existence and uniqueness of solution to the Bellman equation. Intensity control models are also used in revenue management and dynamic pricing. A buffer flow system with jumps is considered by \cite{Li1988}, where the cumulative production and demand are modeled by two counting processes, with intensity controlled by production capacity and price. In addition, \cite{Gallego1994,Gallego1997} model dynamic pricing for inventories of products. The demand for those is modeled as point processes and the intensities are controlled by setting prices. In our work, the jump process is used to represent the acquisition of the reward. The miners control the jump intensity through adjusting their computational power or hash rates. This model approach is natural due to the two important properties of Bitcoin payment system mentioned before. 

There has been recent work on games of intensity control.
For instance, \cite{Sircar2012} consider the effects of stochastic resource exploration in dynamic Cournot game, where an exhaustible producer and a green producer set the production to affect the price. \cite{Gallego2014} study dynamic pricing in an oligopolistic market. Each firm competes to sell its product and the equilibrium strategies and prices are resolved. In a mean field game setting, \cite{Chan2017} examine the impact of oil discovery, concluding that higher reserves lead to lower exploration. There the players' interaction was through producers' oil extraction rates.
In this paper, different from most works in the literature, the mean field interaction is through the players' intensities, or hash rates.

The proof-of-work system supporting Bitcoin, Ethereum,\footnote{Ethereum is working towards replacing proof-of-work with proof-of-stake in Ethereum 2.0 (Eth2).  The first step towards this migration (Phase 0) was deployed in December 2020.} and the majority of alt-coins has received heavy criticism for the high energy consumption of its miners.
The most prominent alternative consensus method is called proof-of-stake.\footnote{Proof-of-stake was first introduced in 2011 by user QuantumMechanic on the bitcointalk forums: \url{https://bitcointalk.org/index.php?topic=27787.0}}
In this system, instead of spending computing power as a requirement for creating valid blocks, the participants are instead ``randomly'' chosen in proportion to their current stake in the system.
A user holding 1\% of all coins will in the long run create 1\% of new blocks.
Because block creation is wasteless, this leads to the so-called nothing-at-stake problem in which deviation from ``good'' behavior is not punished.
We refer to \cite{brown2019formal} for a more detailed account of proof-of-stake and its drawbacks.
\cite{fanti2019compounding} and \cite{rosu2019evolution} both study the impact of rewards on the wealth distribution of participants by using the martingale property of participants' \emph{share} of assets in a proof-of-stake system.
\cite{rosu2019evolution} show that under a constant reward scheme, the limiting distribution is stable in terms of the share of total coins, and thus fair (not exhibiting a rich get richer effect).%
\footnote{%
    Although this is a very interesting result and good news for proof-of-stake systems,
    we perceive this as strongly connected to the martingale property of such systems, and believe that it does not translate to proof-of-work for two main reasons:
    First, central to the martingale property is the dilution effect (inflation tax) of introducing new coins.
    In a proof-of-stake system, this means that whereas a wealthier participant receives proportionally larger rewards, the inflation tax is also proportionally larger.
    In contrast, in a proof-of-work system, a miner does not need to hold the cryptoasset to reap the rewards, creating the possibility for a miner to avoid the inflation tax if so desired.
    Second, even though some miners could be competitive long-term, the presence of wealth considerations and risk aversion could prematurely force them out of the game against their will.
    This latter effect is apparent in the model studied here.
}
Additionally, proof-of-stake is believed to not only have the advantage of reducing energy consumption and improving fairness, but also of increasing security, as any attacker must have a stake in the system.
At time of writing, various proof-of-stake systems are under heavy development, both theory and in practice.

\subsection{Contribution and intuition}\label{our_contrib}
We introduce a dynamic competitive mining game in the presence of risk aversion and liquidity constraints.
The miners compete by exerting computational effort in an attempt to obtain the mining rewards, which are distributed based on the computational effort relative to the population aggregate.
They are expected utility maximizers, balancing the cost of computation (e.g.\ electricity) and the reward associated with block creation.

We find the equilibrium explicitly under exponential (CARA) utility (Section \ref{sec:liquid}), thereby establishing the existence and uniqueness of a solution in this case. The explicit solution allows us to understand the influence of model parameters on the equilibrium.

For miners with power (CRRA) utility, 
we find the equilibrium numerically (Section \ref{sec:illiquid}).
In this case, heterogeneity of the initial wealth distribution among miners results in preferential attachment, i.e.\ increasing heterogeneity of wealth over time.
In other words, a miner with greater wealth contributes a larger hash rate, and thus has a higher probability of receiving the next reward, whereas some miners with lesser initial wealth become disincentivized over time from participating entirely, leading to increased wealth inequality. 
Moreover, our results show that increasing centralization in mining power arises, an antithesis to the principles behind cryptocurrency security.

In an extended model (Section~\ref{sec:cost_adv}), we consider an \emph{advanced miner} with cost advantages, for instance due to having access to cheaper electricity or advanced equipment.
We show that the advanced miner accounts for a significant share of the total hash rate.
Hence, cost efficiency is another factor leading to the centralization of mining power.

The main driver of our results is that, {\em ceteris paribus}, the individual miner's ratio of expected reward and the standard deviation of the reward is decreasing in the hash rate (at least to a point).
Coupled with risk aversion, this creates an advantage to a miner willing to take on more risk by mining at a higher rate.
Although assumptions on cost structure, rewards, or preferences do not change this fundamental feature, they can modulate its strength.

The paper's technical and methodological contributions are as follows.
As each miner's action affects the other miners equally, the problem exhibits a natural symmetry that is well suited to be modeled as a mean field game.
Our adaptation of mean field games technology to this problem is novel:
to our knowledge, ours is the first mean field game of control in which the control and the aggregate (rather than the mean) of other players' control influence the intensity of a jump process.
This enables us to capture dynamic features of cryptocurrency mining competition and utility optimization in a numerically tractable way.
Finally, we also provide one of the few explicit mean field game equilbria outside of a linear quadratic framework.

\section{Model and methodology}\label{sec:model}

In this section we develop our model motivated by the specifics of proof-of-work mining and set up the equations to be tackled analytically and numerically in Section~\ref{sec:results}.
Section~\ref{sec:pow_mining} briefly presents the purpose and mechanics of cryptocurrency mining, followed by Section~\ref{sec:continuum_approximation} which describes our method for representing participant miners as a continuum.
Sections~\ref{general_structure}, \ref{oneminerprob}, \ref{sec:equilibrium} collectively describe the mathematical model for the mining game, and finally Section~\ref{sec:illiq_numerical} presents the numerical method we implement.

\subsection{Proof-of-work mining}\label{sec:pow_mining}
Under a proof-of-work protocol, mining is the process by which a block, i.e., a list of transactions, is appended to the cryptocurrency ledger, 
and by which the system controls who may choose the next block, and thus also the next transactions to be registered.
Due to the pseudonymity inherent in the system, any real-world individual or entity can trivially pose as multiple separate users, and for this reason it is problematic to, for instance, take turns in having the right to append the next block.

A solution to this is to give users the right to append the next block in proportion to their computing power by means of a computational mining game.
The underlying assumption is that computational power cannot be monopolized.
Agents are incentivized to participate by rewards to successful miners.
This is accomplished by mining puzzles that are unique to the data being processed.
This data, among other things, includes a reference to the last mined block (thus creating the `chain' of blocks) and the transactions to be processed.
When the mining puzzle is solved by one miner, the miner receives the reward, the data processed by that miner is appended to the blockchain, and the game starts over for everyone, now with a reference to the new block.

The puzzle is characterized by a binary function $h$ with range $[0,1]$ and a $\texttt{target} \in (0,1)$, the latter of which is changing over time.
A miner successfully mines a block characterized by its $\texttt{data}$ if they find an input $y$ such that $h(\texttt{data}, y) < \texttt{target}$.
Because the second input to the function $h$ is arbitrary and does not carry any information about the transactions themselves, it is referred to as a nonce.

To make this game computationally difficult, the function $h$ must have the property that there is no better way to find a nonce $y$ satisfying $h(\texttt{data}, y) < \texttt{target}$ than brute force, i.e., haphazardly evaluating the function for different nonce values.
This property is provided by so-called (cryptographic) hash functions.
The function $h$ is thus referred to as the hash function and its output as a hash.
The precise hash function varies between cryptocurrencies and the target is dynamically adjusted so that the average rate at which the population as a whole solves the puzzles is stable.

All participating miners simultaneously try one nonce after the other until one miner is lucky enough to produce a small enough hash output, below the target.
As a consequence, the probability of miner $i$ appending the next block is
\begin{equation}\label{eqn:relative_hash_rate}\footnotesize
    \frac{\text{miner $i$'s hash rate}}{\text{total hash rate}}.
\end{equation}
After the successful mining of a block, the game repeats itself in the hunt for the subsequent block.
We see from this expression that, provided no miner's hash rate is large relative to the population total, no miner will have undue power over which transactions are included in the ledger.
If this is not satisfied and one or more miners run dominant shares of mining computations, the system is \emph{centralized} and those miners have undue power over the ledger and what transactions are included.
This could allow dominant miners to conduct censorship or other forms of malicious behavior.
For more in-depth detail on the mining game, problems with centralization, and potential attacks, we refer the interested reader to \cite[Chapter 5]{Narayanan2016}.

Finally, note that because only the data changes between blocks, the process describing the arrival of solutions to the puzzle is memoryless.
This is an important property for the mathematical model in Section~\ref{sec:model}.

\subsection{Continuum mean field approximation}
\label{sec:continuum_approximation}
To study the centralization of mining power and rewards, we first recognize the mean field structure inherent in mining competition and then introduce a continuum approximation, which leads to the formulation of a continuum mean field game.

From \eqref{eqn:relative_hash_rate} we see that each player depends on the other players through the sum of their hash rates, which is proportional to the mean hash rate.
Hence, formalizing cryptocurrency mining as a mean field game is completely natural, because each miner is affected by each other miner equally (through their aggregate or mean).%
\footnote{This is in contrast to some other proposed applications of mean field games, such as banking, where the assumed symmetry in agent interaction does not capture the intricate graph structure of real-world banking networks. We thank the associate editor for pointing this out.}

Each miner is characterized by their wealth  and chooses their hash rate to maximize expected utility at a fixed time horizon.
Their wealth changes because of the mining rewards and expenses.
The instantaneous probability of receiving the reward is given by the probability of producing the next block:
\begin{equation*}\footnotesize
    \begin{aligned}
        \frac{\text{pl.\ $i$'s hash rate}}{\text{total hash rate}} &=
        \frac{\text{pl.\ $i$'s hash rate}}{\text{\#players $\times$ mean hash rate}} \approx
        \frac{\text{pl.\ $i$'s hash rate}}{\text{pl.\ $i$'s hash rate} + \text{(\#players$-1$) $\times$ mean hash rate}},
    \end{aligned}
\end{equation*}%
where the last expression is a good approximation when the number of players is large.
Let $M = \text{(\#players$-1$)}$, which is assumed to be large.
To formulate the continuum model, we replace the second term in the denominator on the right hand side by $(M \times \text{continuum mean hash rate})$.
With this approximation, the continuum game we consider has significant computational advantages over finite player games, which are often infeasible to solve numerically (because of dimensionality), especially in a dynamic setting.%
  \footnote{Mean field game structures and their computational advantages are briefly introduced and described in Appendix~\ref{app:mfg}.}
In other words, we mathematically model a continuum of players, but their aggregate effect on each other is still of size $M$, and is thus \emph{interpreted} as a game of $M+1$ players.\footnote{This is reasonable as long as the proportion of active players multiplied by $M$ remains large. We restrict our analysis to finite horizons for which this is the case.}

The purpose of using a \emph{continuum} mean field games structure instead of a finite $N$-player\footnote{We use $N$ here to talk about a general game and $N\to\infty$ so as not to confuse with $M$ which stays finite in our mining game.} model is that it significantly reduces the dimensionality.
The observation that this structure can admit simpler solutions, along with analysis of the convergence for $N\to\infty$, was pioneered in \cite{huang2006large,lasry2007mean}.
Whereas a finite $N$-player model requires solving $N$ coupled equations, the mean field games system consists of only two equations.
The $N$-dimensional system of nonlinear partial differential-difference equations with $N$ state variables in the first case is numerically intractable for $N\geq3$ players, while, as we demonstrate in this paper, the two continuum equations (in time and one state variable) are quite tractable for explicit and numerical resolution.
The intuition is as follows.

In a game of many, but finitely many, players, anyone considering the average of the others will observe a quantity very close to the true mean, by the law of large numbers.
As the number of players increases to infinity, the observed average converges to the true mean.
Hence, in each player's optimization problem, instead of solving for all possible combinations of randomness influencing others, they need only account for the true mean where individual fluctuations average out.

In a Markovian setting, the best response to the population average behavior can be written as a function of the state.
As two different players with the same state respond identically to the same population average, we simultaneously solve for the response of all players.
Knowing the best response, we enter this into the Fokker--Planck (Kolmogorov forward) equation, whose solution is the evolution of the density of the population over the state space.
With the density and best response, the population average can be computed.
If this coincides with the population average for which we found the best response, we have found a (mean field) equilibrium.
Such a mean field games equilibrium typically constitutes an $\epsilon$-Nash equilibrium to the finite player game (with $\epsilon$ converging to 0 as $N \to \infty$). See \ e.g.\ \cite{huang2007large,nourian2013}.

To summarize:
By considering a continuum of players, they do not need to consider random fluctuations affecting other individuals, which reduces the number of dynamic programming equations to one.
Instead, another equation accounts for the population dynamics and behavior.
These equations are coupled by a fixed point equation, and at the fixed point no individual has anything to gain from deviating---an equilibrium.


We next describe the general structure of each individual miner's problem, followed by its mathematical formulation.
With the structure of each miner's action, we then characterize their interaction and the equilibrium condition.

\subsection{General structure of the mining problem}
\label{general_structure}
We consider a continuum of miners who competitively engage in Bitcoin mining over some finite time period $[t_0,T]$. 
The representative miner chooses a  hash rate $\alpha_{t}\geq0$,
incurring a linear cost $\costr{\alpha_t}$ per unit of time, where $c>0$, and $t\in[t_0,T]$.
This cost is interpreted as the cost of electricity, and is thus proportional to their hash rates\footnote{\cite{garratt2020fixed} find that fixed costs associated with acquisition and setup of mining equipment improves the resilience of the system. We leave this interesting extension of the model for future research.}, and, for simplicity, each miner incurs the same marginal cost $c$.\footnote{We consider some models with cost heterogeneity in Section~\ref{sec:cost_adv} and Appendix~\ref{sec:cost_heterogeneity}.}

There are two important features of the Bitcoin proof-of-work protocol: First, the system always generates a reward on an almost fixed frequency that does not depend on the total hash rate.
In fact, the system will adjust the difficulty to make a reward available every 10 minutes on average.\footnote{In reality, in the case of Bitcoin, this number is adjusted every 2016 blocks (about every two weeks). We make the simplifying assumption that this happens continuously in our model. Similarly, we shall assume that also the miners' hash rates may change continuously.}
So it is reasonable to model the total number of rewards in the system as a whole as a Poisson process with a constant intensity $\K > 0$, where $\K^{-1}$ is approximately $10$ minutes. 
Second, a miner's probability of receiving the next mining reward is proportional to the ratio of its hash rate to that of the population. 
Since the math puzzle needs to be solved by brute force, the more hash rate a miner contributes, the more likely it will obtain the reward. 

The number of rewards each miner can receive is modelled by a counting process $N=(N_{t})_{t \geq t_0}$ with jump intensity $\rewardr=(\rewardr_t)_{t \geq t_0} > 0$.\footnote{Formally,
$P[ N_{t + \Delta t} - N_t = 1 ] = \rewardr_t \Delta t + o(\Delta t)$ and $P[ N_{t + \Delta t} - N_t \geq 2 ] = o(\Delta t)$, see \cite{bremaud}.}
Let $M + 1$ be the total number of miners and $\meanhashr_{t}\geq0$ denote the mean hash rate across all miners.
Here, our model for the reward intensity at time $t$ as a function of an individual's hash rate $\alpha_t$ and the mean hash rate is
\begin{equation}\label{eqn:reward_rate}
    \rewardr_t := \left\{\begin{aligned}
   & \K\,\frac{\alpha_{t}}{(\alpha_{t}+M\meanhashr_{t})}, & \alpha_t>0,\\
& 0 & \alpha_t=0,
\end{aligned}\right.   
\end{equation}
and we use $M\meanhashr_{t}$ to approximate the total hash rate of other miners.

Each miner is is modeled as having negligible impact on the population's mean production. However, the factor $M$ in front of the mean hash rate  in \eqref{eqn:reward_rate} implies that the mean field interaction is strong, whereas often in the literature it is assumed to be small for computational and technical reasons.
We argue that for cryptocurrency problems, interaction with the total hash rate is essential in a realistic model.
Indeed, this does introduce numerical difficulties, for which we provide an effective algorithm in Section~\ref{sec:illiq_numerical}.

The miners have initial wealth $x$, distributed at time $t_0$ according to an initial density function $m_0$. 
Then, an individual miner's wealth process $X=(X_{t})_{t \geq t_0}$ follows 
\begin{equation}
\label{wealth process for individual miner}
dX_{t} = - \costr{\alpha_{t}}\,dt + r\,dN_{t},
\end{equation}
where $r$ is the value of the mining reward.
Successfully mining a block, i.e., appending the next block to the ledger, grants the miner a reward as compensation.
This reward has two parts.
The so-called block reward is set by the system as a fixed number per block.
In addition to the block reward, the miner also receives any transaction fees from transactions included in the appended block.
The value of the total reward is interpreted as the product of the cryptoasset price and its quantity.\footnote{The miners are also rewarded the transaction fees in successfully mined blocks.  We use the term reward and mining reward to refer to the total amount received, i.e., the block reward plus the transaction fees.}

Since our focus is on the strategic decision of miners and the centralization in the competition, we treat the reward as a constant.%
\footnote{Although in reality the mining reward is not constant due to regular decreases of the block reward, fluctuations in transaction fees, and fluctuations in the value of the cryptocurrency relative to the unit denominating costs, we believe our base model is a tractable starting point from which a lot of features of the cryptocurrency market can be seen.
As a first step to seeing the effect of stochastic reward, in Appendix~\ref{sec:stoch_price}, we introduce a model with stochastic reward and demonstrate that, in the setting considered there, miners react only to the current level of the price and do not factor in reward volatility.
  Moreover, we have run the numerical experiments also with deterministically halving rewards, and qualitative results such as the cutoff point in Proposition~\ref{cutoff of the wealth level} are still observable.
  }
Nevertheless, the number of bitcoins as a reward is set to decrease geometrically with 50\% reduction every 4 years approximately,
and the current block reward is 6.25 bitcoins plus transaction fees of a few percent of the block reward.
Although not presented, we have numerically considered this decreasing reward (and, for the sake of completeness, increasing) without seeing a qualitative change in the behavior.

\subsection{The miners' optimization problem\label{oneminerprob}}
With the dynamics of Section~\ref{general_structure}, we are ready to formulate the miners' risk-reward problems.
The setup of the continuum model is to consider the optimization problem of an individual miner, in response to any given action of the rest.
Let $\alpha = (\alpha_{t})_{t \geq t_0}$ be a Markovian control.
The process $\alpha$ can then be associated with a function $(t, X_t) \mapsto \alpha(t, X_t; \meanhashr)$ of the current state.
With such controls, the wealth process $X$ is a Markov process.
The objective of the representative miner is to maximize the expected utility at fixed terminal time $T$.%
  \footnote{Many economic models choose an infinite horizon utility of consumption criterion to remove time as a dimension in the problem.
  That saving does not occur with mean field games, but the state dimension has already been decreased from $M+1$ to 2: one plus the time dimension.
  In addition, one could argue that cryptocurrency miners are not operating on an infinite horizon.}
Let $U$ denote a strictly increasing and concave utility function.%
  \footnote{The miners are endowed with the same utility function and costs in our analysis, although we do consider some cases of different risk aversions and costs in Section~\ref{sec:cost_adv}.
    Miners with heterogeneous risk aversion parameters could be considered similarly to the heterogeneous costs in Appendix~\ref{sec:cost_heterogeneity}, at some added complexity.}
The miner's value function is
\begin{equation}
\label{base value function}
v(t_0,x; \meanhashr) = \sup_{\alpha\geq0}\E[U(X_{T})\mid X_{t_0} = x].
\end{equation}
We give a basic property of the value function.
\begin{lemma}
	\label{property of the utility function base case}
    For any time $t\in [t_{0},T]$ and fixed $\meanhashr > 0$, the value function $v(t,x; \meanhashr)$ is finite and strictly increasing in the wealth $x$.
\end{lemma}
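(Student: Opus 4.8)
The plan is to prove the two assertions separately --- finiteness first, then strict monotonicity --- using throughout the single structural observation that the reward intensity in \eqref{eqn:reward_rate} satisfies $\rewardr_t = \frac{\alpha_t}{D(\alpha_t + M\meanhashr_t)} \le \frac1D$ for every admissible $\alpha$, since $\meanhashr_t > 0$ and $\alpha_t \ge 0$. Consequently, $N_T - N_t$ is stochastically dominated, \emph{uniformly over all admissible controls}, by a Poisson random variable of mean $(T-t)/D$; in particular $\E[N_T - N_t] \le (T-t)/D$. This uniformity is what will later make the strict inequality work.

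For finiteness I would bound $v$ on both sides. The lower bound is immediate: the zero control $\alpha \equiv 0$ is admissible (as $0 \in [0, \maxrate(x)]$), freezes the wealth at $X_T = x$, and hence gives $v(t,x;\meanhashr) \ge U(x) > -\infty$. For the upper bound I would use concavity: fixing a reference point $x_0$ and a supergradient $g \ge 0$ there (nonnegative because $U$ is increasing) yields the affine majorant $U(w) \le U(x_0) + g(w - x_0)$. Since the cost term only decreases wealth, $X_T \le x + p(N_T - N_t)$ pathwise, so taking expectations and using $g \ge 0$ together with the intensity bound gives $\E[U(X_T)] \le U(x_0) + g\big(x + p(T-t)/D - x_0\big)$, a finite constant independent of the control. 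Taking the supremum over $\alpha$ bounds $v$ above.

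For monotonicity, write $\delta = x_2 - x_1 > 0$ and I would use a shift-of-control argument. Given any admissible feedback control $\alpha$ for the initial condition $x_1$, the shifted control $\tilde\alpha(s,y) := \alpha(s, y - \delta)$ is again Markovian and admissible for $x_2$, since $\tilde\alpha(s,y) \in [0, \maxrate(y-\delta)] \subseteq [0, \maxrate(y)]$ because $\maxrate$ is non-decreasing. Driving both wealth processes by a common thinning of the same Poisson noise, the feedback shift forces the hash rate applied at $x_2$ to equal that applied at $x_1$ at every instant, so the cost rate and the reward intensity coincide and the two trajectories stay exactly $\delta$ apart; hence $X^{x_2, \tilde\alpha}_T = X^{x_1,\alpha}_T + \delta$. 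This already yields $v(t, x_2; \meanhashr) \ge \sup_\alpha \E[U(X^{x_1,\alpha}_T + \delta)] \ge v(t, x_1;\meanhashr)$.

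The main obstacle is upgrading this to a \emph{strict} inequality uniformly in the control, since a supremum of strictly larger quantities need not be strictly larger. Here I would use concavity quantitatively: $U(w+\delta) - U(w) \ge \delta\, U'(w+\delta)$, and since $X^{x_1,\alpha}_T + \delta \le x_2 + p(N_T - N_t)$ and $U'$ is non-increasing, the gain is bounded below by $\delta\, U'\big(x_2 + p(N_T - N_t)\big)$. Choosing $K$ with $P(N_T - N_t \le K) \ge \tfrac12$ --- possible uniformly in $\alpha$ by the Poisson domination above --- and using $U'(z) > 0$ everywhere (a strictly increasing concave $U$ has strictly positive one-sided derivatives) gives $\E\big[U'(x_2 + p(N_T - N_t))\big] \ge \tfrac12 U'(x_2 + pK) =: \eta > 0$, a constant independent of $\alpha$. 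Therefore $\E[U(X^{x_1,\alpha}_T + \delta)] \ge \E[U(X^{x_1,\alpha}_T)] + \delta\eta$ for every admissible $\alpha$; combining with the shift bound and taking the supremum yields $v(t,x_2;\meanhashr) \ge v(t,x_1;\meanhashr) + \delta\eta$, the desired strict monotonicity.
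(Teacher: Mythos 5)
Your proof is correct and rests on the same two pillars as the paper's: the uniform intensity bound $\rewardr_t \le 1/D$ combined with concavity of $U$ for finiteness, and transferring the surplus $\delta = x_2 - x_1$ into a riskless account while replicating the $x_1$-strategy for monotonicity. The differences are in execution. For finiteness the paper applies Jensen's inequality directly, $\E[U(X_T)] \le U(\E[X_T]) \le U(x + pT/D)$, whereas you use a supporting line at a reference point; these are interchangeable. The more substantive difference is in the strictness argument. The paper takes \emph{the} optimal control $\alpha^1$ for $x_1$, shifts it, and concludes $v(t,x_1) = \E[U(X^1_T)] < \E[U(X^{2,1}_T)] \le v(t,x_2)$ --- an argument that implicitly assumes the supremum in \eqref{base value function} is attained. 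You correctly identify that without attainment a pointwise strict improvement does not survive the supremum, and you repair this by deriving a control-uniform lower bound $U(w+\delta) - U(w) \ge \delta\, U'\bigl(x_2 + p(N_T - N_t)\bigr)$ together with the Poisson domination $P(N_T - N_t \le K) \ge \tfrac12$, yielding the quantitative gap $v(t,x_2;\meanhashr) \ge v(t,x_1;\meanhashr) + \delta\eta$ with $\eta > 0$ independent of the control. This is a genuine strengthening: it removes the attainment assumption and in fact gives a modulus of strict increase, at the cost of requiring one-sided derivatives of $U$ (which exist and are positive for any strictly increasing concave function, as you note). Both arguments also use, as you make explicit and the paper leaves implicit, that $\maxrate$ is non-decreasing so that the shifted control remains admissible.
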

The proof is given in Appendix~\ref{proof2.1}. The result is intuitive, because more wealth gives more flexibility to miners to choose their hash rates, and it will be useful below.

For a fixed mean hash rate $\meanhashr > 0$, we first write down the HJB
\begin{equation}
\label{base HJB}
\partial_t v + \sup_{\alpha\geq0}\left(-\costr{\alpha} \partial_x v + \frac{\K\alpha}{(\alpha+M\meanhashr_{t})}\Delta v\right) = 0,
\end{equation}
with terminal condition $v(T,x) = U(x)$, and where $$\Delta v= v(t,x+r;\meanhashr) - v(t,x;\meanhashr).$$ 
We assume that $v$ is a classical solution of \eqref{base HJB}.\footnote{In the case of exponential utility, this is indeed verified in Section~\ref{sec:liquid}, so this is a reasonable assumption.}
By Lemma \ref{property of the utility function base case}, $\Delta v > 0$ and $\partial_x v > 0$, and so the optimal hash rate is given by
\begin{equation}
    \opthashr(t, x; \meanhashr) = \left\{
\begin{aligned}
    &-M\meanhashr_t + \sqrt{\frac{\K M \meanhashr_{t}\Delta v(t,x;\meanhashr)}{c\partial_x v(t,x;\meanhashr)}}, ~~~&&\text{if}~~~ \meanhashr_t < \frac{\K\Delta v(t,x;\meanhashr)}{Mc\partial_x v(t,x;\meanhashr)}, \\
\label{base optimal hash rate}
&0 &&\text{otherwise}.
\end{aligned}
\right.
\end{equation}
We observe in this expression that an individual miner at least temporarily stops hashing at time $t$ if the aggregate competition $M\meanhashr$ is above the threshold $\frac{\K}{c}\frac{\Delta v(t,x; \meanhashr)}{\partial_x v(t,x; \meanhashr)}$, but may resume as the game evolves. In this sense, there is endogenous entry and exit.

A key question here is whether at a given time $t$ there exists a cutoff point $x_{\mathrm{off}}(t)$ that separates the population into those who are hashing and those who are not, at that time.
We observe that such a threshold $x_{\mathrm{off}}(t)$ is determined by when $\frac{\Delta v(t,x; \meanhashr)}{\partial_x v(t,x; \meanhashr)}$ is small enough.
We will see that with CARA utility, the answer is no, whereas with CRRA utility the answer is yes, as shown in Proposition~\ref{cutoff of the wealth level}.

Finally, we simplify the HJB equation \eqref{base HJB} by plugging in \eqref{base optimal hash rate} to give:
\begin{equation}
\left\{
\begin{aligned}
    &\partial_t v + \left(\sqrt{Mc\meanhashr_t \partial_x v}-\sqrt{\K\Delta v}\right)^{2} = 0, ~~~&&\text{if}~~~ \meanhashr_t < \frac{\K\Delta v}{Mc\partial_x v},\\
\label{base HJB after optimization}
&\partial_t v = 0, &&\text{otherwise}.
\end{aligned}
\right.
\end{equation}

\subsection{Equilibrium characterization}\label{sec:equilibrium}
We now look for a Markovian equilibrium of the mining game.
It is useful to think of the continuum of miners as being labeled by their wealth $x$.
Let $\opthashr(t, x; \meanhashr)$ be the optimal hash rate of miner $x$,
and denote by $m(t,x; \meanhashr)$ the resulting density of the miners' wealth as a function of time and wealth.
We say $\eqmeanhashr$ forms an \emph{equilibrium mean hash rate} of the mining game if
\begin{equation}\label{eqn:equilibrium_mean_hash_rate}
\eqmeanhashr_t = \int_{\R}\opthashr(t,x; \eqmeanhashr)m(t,x; \eqmeanhashr)dx, ~~~\forall t \in [t_{0},T].
\end{equation}
Henceforth, let $\eqmeanhashr$ denote an equilibrium mean hash rate, and denote
\[
    v(t, x) = v(t, x; \eqmeanhashr), \quad \opthashr(t, x) = \opthashr(t, x; \eqmeanhashr), \quad m(t, x) = m(t, x; \eqmeanhashr).
\]

We assume that $\eqmeanhashr_t \neq 0$ for all $t$ for the following reason.
If $\eqmeanhashr_t = 0$, then each miner has an admissible control that dominates the choice of not mining.\footnote{\label{footnote_zero_mining}This holds for any cost $c$, as for any small $\varepsilon >0$, a miner can hash at rate $\alpha = \varepsilon$ and, on average, receive $r\K - c\varepsilon$ as net rewards.  As $\varepsilon$ is arbitrarily small, this is clearly positive with arbitrarily small risk.}
Hence, unless the mass of miners with non-zero admissible controls is zero, some mining will always occur.

We assume the initial density $m_{0}(x)$ is continuously differentiable and satisfies
\begin{equation}\label{eqn:initial_density}
\int m_{0}(x)\,dx = 1.
\end{equation}

Substituting \eqref{base optimal hash rate} into the equilibrium condition \eqref{eqn:equilibrium_mean_hash_rate} gives
\begin{equation}
\eqmeanhashr_t = \int_{E_t}\alpha^{*}(t,x)m(t,x)dx = -M\eta(t)\eqmeanhashr_t + \sqrt{\frac{\K M\eqmeanhashr_t}{c}}\int_{E_t}\sqrt{\frac{\Delta v(t,x)}{\partial_x v(t,x)}}m(t,x)\,dx,
\end{equation}
where 
\begin{equation}
\label{level set for active miners}
E_t = \{x: \alpha^{*}(t,x) > 0\}
\end{equation}
denotes the wealth level on which the miners are active and
$\eta(t) = \int_{E_t}m(t,x)\,dx$ 
denotes the fraction of active miners. Thus, 
\begin{equation}
\label{mean hash rate base case}
\eqmeanhashr_t = \frac{M}{(1+M\eta (t))^{2}}\left(\int_{E_t}\sqrt{\frac{\K\Delta v(t,x)}{c\partial_x v(t,x)}}m(t,x)\,dx\right)^{2},
\end{equation}
while the Fokker-Planck equation for $m$ is given by
\begin{equation}
\label{base fokker planck equation}
\partial_t m - \partial_x (c\alpha^{*}(t,x)m) -\K\left(\frac{\alpha^{*}(t,x-r)}{\alpha^{*}(t,x-r)+M\eqmeanhashr_t}m(t,x-r)-\frac{\alpha^{*}(t,x)}{\alpha^{*}(t,x)+M\eqmeanhashr_t}m(t,x)\right) = 0,
\end{equation}
with initial distribution $m(t_{0},x) =m_{0}(x)$.

Proving existence and uniqueness of an equilibrium in the general case is beyond the scope of this paper.
However, in Propositions~\ref{base case exponential utility} and~\ref{prop:cost_adv_exp_liq}, we find the unique equilibrium (in a given class) explicitly.
In the next section we introduce a numerical method whose results in Sections~\ref{sec:illiquid} and~\ref{sec:cost_adv_crra} give strong indication of existence of a unique equilibrium in those cases.
Moreover, in the case where we have an explicit solution, the numerical solution is observed to reproduce it (Section~\ref{sec:exp_num_conv}).

\subsection{Numerical method}\label{sec:illiq_numerical}

Solving the equations \eqref{base HJB after optimization} for $v$ and \eqref{base fokker planck equation} for $m$, \emph{individually}, are classical PDE problems.
The difficulty here lies in finding a solution that satisfies the equilibrium condition \eqref{eqn:equilibrium_mean_hash_rate}.
The equilibrium can be represented as a fixed point equation involving the solutions $v$ and $m$, formally written as
\[
  \meanhashr \mapsto \Psi(\meanhashr), \quad \text{where} \quad \Psi(\meanhashr) = (\text{integrate $\opthashr$, $m$}) \circ (\text{solve for $m$}) \circ (\text{solve for $v$, $\opthashr$})(\meanhashr),
\]
and where $\circ$ as usual denotes composition.
Our method for finding an equilibrium is an iteration of the fixed point mapping $\Psi$.

Each component of $\Psi$ can be readily computed with standard finite difference methods.
Although a direct fixed point iteration for $\Psi$ often does work, in this case, the large factor $M$ causes some overshoot when iterating.
To resolve this, we use an (under-) relaxation of the fixed point iteration.
Such relaxations are well known to help convergence and do not alter the fixed point (equilibrium) itself.
\cite{li2021simple} analyze convergence of this type of relaxed iterations in the particular case of mean field game problems.
We refer to them for complete technical details on this type of scheme and the mathematical benefits of relaxation.

What follows is a more detailed account of the iteration, the PDE solutions, and the relaxation technique.

\begin{enumerate}
    \item Initialize with a mean hash rate $t \mapsto \meanhashr_{t}$, for instance as constant.

    \item \label{num_step_v}
        Solve for the value function $v$ and the hash rate $\opthashr$:

        At time $T$, the value function is $v$ known, so \eqref{base optimal hash rate} yields $\opthashr(T, x; \meanhashr)$.
        This value is used as an approximation of $\opthashr(T-dt, x; \meanhashr)$, which allows solving for $v$ at $T-dt$, using the HJB equation:
        \begin{equation}
            \partial_t v + \left(-\costr{\opthashr(T, x; \meanhashr) } \partial_x v + \frac{\K\opthashr(T, x; \meanhashr) }{(\opthashr(T, x; \meanhashr)+M\meanhashr_{T})}\Delta v\right) = 0.
        \end{equation}
        The $\Delta v$ term is calculated explicitly using $v(T,x+r;\meanhashr) - v(T,x;\meanhashr)$, while the other part is discretized by an implicit finite difference scheme.
        This separation is made to balance stability with computational efficiency.
        With the value function $v$ at $T-dt$, we can get $\opthashr(T-dt, x; \meanhashr)$.
        Repeat such time steps backwards until $t = 0$.
        This yields both functions $v$ and $\opthashr$.

    \item \label{num_step_m}
        The next step is to solve the Fokker--Planck equation and get the mean field control.

        The $\opthashr(t,x; \meanhashr)$ is obtained from the previous step allows solving \eqref{base fokker planck equation} and \eqref{eqn:fokkerplanck_illiq_smallx} for $m(t,x)$.
        In doing so, the following terms are discretized by an implicit finite difference scheme
        \begin{align}
            \partial_t m  - \partial_x (c\alpha^{*}(t,x)m) + \frac{\K\alpha^{*}(t,x)}{(\alpha^{*}(t,x) + M \meanhashr_{t})}m,
        \end{align}
        while $m$ in
        \begin{align}
            -\frac{\K\alpha^{*}(t,x-r)}{\alpha^{*}(t,x-r)+M\meanhashr_t}m(t,x-r)
        \end{align}
        is evaluated in the previous time step, again separated to balance stability and efficiency.

    \item
      Due to the large factor $M$ scaling up the errors away from the equilibrium, so a direct iteration will tend to overshoot.
      We therefore use an under-relaxation technique to reduce this effect.
      For some parameter $w\in[0,1)$ and for each time $t$, the relaxed iteration is given by
      \begin{equation}
        \label{eqn:relaxed_mean_hash_rate}
          \meanhashr_{t}^\text{new} = w\meanhashr_{t} + (1-w)\int_{\R}\opthashr(t,x; \meanhashr)m(t,x; \meanhashr)dx = w \meanhashr_{t} + (1-w)\Psi(\meanhashr)_t.
      \end{equation}
      This type of (under-) relaxation technique is common in iteration schemes and stabilizes the iteration at the expense of slower iteration, cf.\ successive over-relaxation, which instead speeds up stable iterations with $w\geq 1$.
      The choice of $w$ has no impact on the equilibrium fixed point.

    \item
      Finally, repeat from the first step with $\meanhashr = \meanhashr^\text{new}$, unless the difference is sufficiently small to terminate.%
      \footnote{%
        As usual with fixed point iterations, convergence requires the initial guess to lie in the basin of attraction of the fixed point mapping.
        Outside this set, the iterations soon result in $\meanhashr = 0$, which can be discarded as a non-equilibrium, as mentioned in Section~\ref{sec:equilibrium}.
        The choice $w = 1 - \frac{1}{M}$ makes the set large enough to find after a few guesses, and this is the value used for all figures.
        We emphasize that in all our experiments the limiting equilibrium has always been unique, regardless of initial guess, for any particular set of parameters.
      }
\end{enumerate}

We note that \cite{li2021simple} study relaxation of the problem by relaxing the updating of both $v$ and $m$.
Adding relaxation to the updating of $v$ and $m$ (steps \ref{num_step_v} and \ref{num_step_m}) could have further benefits in improving convergence properties, but because the scheme here successfully solved all problems we considered, such further relaxation was not explored.

\section{Explicit (CARA utility) and Numerical (CRRA utility) Solutions \& Analysis}\label{sec:results}
When the miners' utility functiona are of CARA-type,
we give explicit formulas for the equilibrium Section (\ref{sec:liquid}).%
These provide evidence that the model is well-posed, but, as usual, wealth effects are missing with this choice of utility.
We demonstrate in Section~\ref{sec:exp_num_conv} that the numerical method reproduces the explicit solution.
We also address the problem with miners having CRRA utilities numerically, in which case wealth effects become apparent.

\subsection{Exponential utility (CARA)}\label{sec:liquid}
With exponential utility, 
\begin{equation}
  \label{eqn:exp_util}
  U(x) = -\frac{1}{\gamma}e^{-\gamma x}, \quad \gamma>0, \quad x\in\R,
\end{equation}
where $\gamma$ is the risk aversion parameter,\footnote{With a slight abuse of notation, we use $\gamma = 0$ to label the risk-neutral case of $U(x) = x$.
  The results of Section~\ref{sec:liquid} hold for risk-neutral miners with formal replacement of $\gamma$ by 0.}, 
a miner's current wealth $X_{t}$ can take any value in $\R$.  
It is convenient to define\footnote{In the risk-neutral case, $w(r; 0) = r$.}
\begin{equation} 
w(r;\gamma) =  \frac{1-e^{-\gamma r}}{\gamma}.\label{wdef}
\end{equation}

\subsubsection{Explicit solution}
The following proposition gives formulas for the explicit solution of the mining game.

\begin{proposition}
	\label{base case exponential utility}
	With exponential utility, \eqref{eqn:exp_util}, in the equilibrium, all miners are always active, with constant hash rate
	\begin{equation}
	\label{exponential optimal rate}
	\alpha^{*}(t, x) \equiv \eqmeanhashr_t \equiv \frac{
	\K M}{c(1+M)^{2}} w(r;\gamma),
	\end{equation}
    and their individual reward rate is
    \[
        \rewardr_t \equiv \frac{\K}{(1+M)},
    \]
	for any $t\in [t_{0},T]$ and $x\in \R$. The value function is given by
	\begin{equation}
	\label{exponential value function}
	v(t,x) = U(x)e^{-\gamma w(r;\gamma)\frac{\K(T-t)}{(1+M)^{2}}}.
	\end{equation}
\end{proposition}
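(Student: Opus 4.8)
The plan is to exploit the multiplicative structure that exponential utility forces on the value function, which makes the optimal hash rate independent of wealth and thereby collapses the mean field fixed point to a single scalar equation. I would start from the ansatz $v(t,x) = U(x)\,g(t)$, where $g$ is a smooth time factor with $g(T) = 1$; this is natural because $U(x+p) = e^{-\gamma p}U(x)$. Differentiating gives $\partial_x v = e^{-\gamma x}g(t)$ and $\Delta v = \tfrac{1}{\gamma}e^{-\gamma x}g(t)\,(1-e^{-\gamma p})$, so that the ratio
\begin{equation}
\frac{\Delta v}{\partial_x v} = \frac{1-e^{-\gamma p}}{\gamma}
\end{equation}
is independent of both $x$ and $t$. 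This is the single observation that drives the entire argument.

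By \eqref{base optimal hash rate}, the candidate optimal control $\opthashr(t,x;\meanhashr)$ depends on $x$ only through $\Delta v/\partial_x v$, and is hence constant in $x$ for each fixed $\meanhashr_t$. Since the continuity (Fokker--Planck) equation \eqref{base fokker planck equation} conserves total mass, so that $\int_\R m(t,x)\,dx = 1$, the equilibrium condition reduces to the scalar fixed point $\eqmeanhashr_t = \opthashr(t,\cdot\,;\eqmeanhashr)$. Substituting the ratio above, this becomes
\begin{equation}
\eqmeanhashr_t = -M\eqmeanhashr_t + \sqrt{\frac{M\eqmeanhashr_t}{Dc}\cdot\frac{1-e^{-\gamma p}}{\gamma}},
\end{equation}
and isolating the radical and squaring gives $\eqmeanhashr_t = \frac{M}{(1+M)^2}\frac{1-e^{-\gamma p}}{Dc\gamma}$. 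Because the quantity under the radical carries no dependence on $t$, the fixed point is identical at every time, so $\eqmeanhashr$ is constant; the reward rate $\opthashr/\bigl(D(\opthashr+M\eqmeanhashr)\bigr) = 1/\bigl(D(1+M)\bigr)$ then follows immediately from $\opthashr \equiv \eqmeanhashr$.

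To determine $g$, I would substitute the ansatz into the reduced HJB \eqref{base HJB after optimization}. At the equilibrium value of $\eqmeanhashr$ the two radicals $\sqrt{Mc\eqmeanhashr\,\partial_x v}$ and $\sqrt{\Delta v/D}$ equal a common term times the factors $M/(1+M)$ and $1$ respectively, so their squared difference is $\tfrac{1}{(1+M)^2}\,\tfrac{\Delta v}{D}$; after cancelling $e^{-\gamma x}$ throughout, the HJB reduces to the linear ODE $g'(t) = \frac{1-e^{-\gamma p}}{D(1+M)^2}\,g(t)$, whose solution with $g(T)=1$ is precisely the exponential factor in \eqref{exponential value function}. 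It then remains to check that the active branch of \eqref{base optimal hash rate} applies, i.e. that $\eqmeanhashr_t < \Delta v/(MDc\,\partial_x v)$; cancelling the common positive factor $\tfrac{1-e^{-\gamma p}}{Dc\gamma}$ reduces this to $M^2 < (1+M)^2$, which holds for every $M>0$, confirming that all miners are active.

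I do not anticipate a deep obstacle: the whole proposition unwinds once one notices that exponential utility renders $\Delta v/\partial_x v$ constant in $x$, which is exactly what decouples the control from the wealth distribution and removes the need to solve \eqref{base fokker planck equation} at all. The point requiring genuine care is the verification step --- confirming that the explicit $v$ solving the HJB is in fact the value function \eqref{base value function} and that the constant hash rate is optimal against the equilibrium $\eqmeanhashr$. This is a standard argument (applying the Dynkin formula to $v(t,X_t)$ along an arbitrary admissible control, then using $\partial_x v, \Delta v > 0$ from Lemma \ref{property of the utility function base case} together with the concavity of $U$), but it is where one must verify that the time-homogeneity of the candidate, guessed at the outset, is genuinely forced by the $t$-independence of the fixed-point equation rather than merely assumed.
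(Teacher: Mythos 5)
Your proposal is correct and follows essentially the same route as the paper: the same separable ansatz $v=U(x)h(t)$, the same key observation that $\Delta v/\partial_x v=(1-e^{-\gamma p})/\gamma$ is independent of $x$ and $t$, and the same scalar fixed-point computation for $\eqmeanhashr$. The only cosmetic difference is that the paper establishes that all miners are active via the inequality $\tfrac{\eta^2 M}{(1+\eta M)^2}\le\tfrac{M}{(1+M)^2}<\tfrac{1}{M}$ applied to its formula \eqref{mean hash rate base case}, whereas you verify the active-branch condition $M^2<(1+M)^2$ a posteriori --- the same check in a different order.
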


\begin{remark}
    \label{rem:mfg_finite_coincide}
    Due to the wealth independence of optimal hash rates, and thus the wealth distribution, this is the same solution as a model without the mean field game approximation.
    Indeed, with $M+1$ players all using the same (wealth-independent) strategy $\opthashr$, the total hash rate in the mean field games model $\opthashr + M \eqmeanhashr = (1 + M) \opthashr$ is equal to total hash rate in the finite player model: $\sum_{i=0}^M \opthashr = (1 + M) \opthashr$.
    This exact correspondence is specific to this particular setup.
\end{remark}

\begin{remark}
    By the above solution, we observe that the total mining $\sum_{i=1}^M \alpha^{*}(t, x)$ is bounded by, and for large $M$ approximately equal to $\frac{\K}{c}w(r;\gamma)$.
    This is in turn bounded by $\K/c\gamma$ for CARA utility,
    and, as a consequence, the total mining is thus also bounded by $\K/c\gamma$, regardless of $r$.
\end{remark}

\begin{proof}
	We guess the form $v(t,x) = U(x)h(t)$ and then 
    the HJB equation \eqref{base HJB after optimization} becomes
	\begin{equation}
	\left\{
	\begin{aligned}
	&\partial_{t}h - \gamma \left(\sqrt{Mc\eqmeanhashr_t}-\sqrt{\K w(r;\gamma)}\right)^{2}h = 0, ~~~&&\text{if}~~~ \eqmeanhashr_t< \frac{\K}{Mc}w(r;\gamma),\\
	&\partial_{t}h = 0, &&\text{otherwise},
	\end{aligned}
	\right.
	\end{equation}
	with terminal condition $h(T) = 1$.
	It is an equation in $t$ only, which validates our ansatz. Moreover, this ansatz implies that $\Delta v$/$\partial_x v$ does not depend on $x$. Hence, from \eqref{mean hash rate base case},
	\begin{equation}
	\eqmeanhashr_t = \frac{\eta^{2}M}{(1+\eta M)^{2}}\frac{\K\Delta v}{c\partial_x v}\leq \frac{M}{(1+M)^{2}}\frac{\K\Delta v}{c\partial_x v} < \frac{\K\Delta v}{Mc\partial_x v},
	\end{equation}
	which means all miners are active and $\eta \equiv 1$. 
	
	The equilibrium mean hash rate is obtained from plugging in the ansatz into \eqref{mean hash rate base case}.
    Thereafter, \eqref{base optimal hash rate} yields \eqref{exponential optimal rate}.
    The value function then satisfies
	\begin{equation}
	\partial_{t}h - \frac{\gamma\K}{(1+M)^{2}}w(r;\gamma)\,h = 0,
	\end{equation}
	with terminal condition $h(T) = 1$. Thus we have \eqref{exponential value function}.
\end{proof}

\subsubsection{Risk-Reward Analysis}\label{sec:risk_reward_analysis}
As $\opthashr$ is constant, we drop the dependence on $t$ and $x$.
In the equilibrium, the wealth of the representative miner can be written as 
\begin{equation}
    X_{t_0 + t} = X_{t_0} - \costr{\opthashr} t + r (N_{t_0 + t}^{*} - N_{t_0}^*) = X_{t_0} - \frac{\K M}{(1+M)^{2}} w(r;\gamma)t + r (N_{t_0 + t}^{*} - N_{t_0}^*),
\end{equation}
where $N^{*}$ has the constant jump rate $\lambda_t = \frac{\K}{(1+M)}$ and thus is a Poisson process.
Then we can compute the expectation and variance of $X_{t_0 + t}$:
\begin{align}
    \E[X_{t_0 + t} - X_{t_0}] = \Big(r-\frac{M w(r;\gamma)}{1+M} \Big)\frac{\K t}{(1+M)},
    \quad
    \text{Var}(X_{t_0 + t} - X_{t_0}) = \frac{\K r^{2}t}{(1+M)}.
\end{align}

If $\gamma r$ is small, the expected wealth change is approximately
$\frac{\K r t}{(1+M)^{2}}$,
which decays with competition as $M^{-2}$.
However, the variance is only discounted by a linear factor $1+M$, and in particular the standard deviation decays as $1/\sqrt{M}$.
Hence, as more miners join the game, the expected gain shrinks very fast, but the potential risk decreases slowly: a more crowded competitive mining environment is less attractive in terms of the risk-reward tradeoff than a sparser one.

\subsubsection{Numerical convergence}
\label{sec:exp_num_conv}
We use the explicit solution \eqref{exponential optimal rate} to test the numerical scheme from Section~\ref{sec:illiq_numerical}.
Table~\ref{tab:exp_num_errors} lists errors for one run.
The errors are observed to converge toward zero,
and as the iteration closes in on the correct solution, the convergence is of order 1 and rate 0.5.
Similar numbers are obtained for other starting points and parameter values, although the initial iterations can be slower than rate 0.5 far away from the solution, due to the strong under-relaxation.
The relative error of approximately 0.1\% in the last column is due to discretization and decreases with the grid size.

This observed convergence indicates that the numerical method is sound, and is therefore valuable indication of the correctness in subsequent sections with CRRA utility, where no explicit solution exists.

\begin{table}
  \centering
 \small
\begin{tabular}{c|c|c}
  Iteration & $\frac{1}{T} \sum_{t=0}^T |\meanhashr_t - \meanhashr_t^{\text{new}}|$ & $\frac{1}{T} \sum_{t=0}^T (\Psi(\meanhashr)_t - \meanhashr_t^{\text{explicit}}) / \meanhashr_t^{\text{explicit}}$ \\[1em]
  \hline
  1 & 266244 & 69.673 \\
  2 & 139090 & 36.398 \\
  3 & 71891 & 18.813 \\
  4 & 36485 & 9.5473 \\
  5 & 18382 & 4.8095 \\
  6 & 9228 & 2.4139 \\
  7 & 4624 & 1.2089 \\
  8 & 2314 & 0.60440 \\
  9 & 1157 & 0.30166 \\
  10 & 578.6 & 0.15020 \\
  \hline
  11 & 289.28 & 0.07446 \\
  12 & 144.60 & 0.03659 \\
  13 & 72.281 & 0.01766 \\
  14 & 36.130 & 0.00820 \\
  15 & 18.060 & 0.00347 \\
  16 & 9.0285 & 0.00111 \\
  17 & 4.5135 & 0.0000068 \\
  18 & 2.2565 & 0.00065 \\
  19 & 1.1282 & 0.00095 \\
  20 & 0.5641 & 0.00110 \\
  \hline
  30 & 0.00055 & 0.00124 \\
  \hline
  40 & 5.5184e-07 & 0.00124 \\
  \hline
  50 & 4.2964e-09 & 0.00124 \\
\end{tabular}
\caption{\label{tab:exp_num_errors}\small{\em Example of observed convergence of the numerical method to the explicit solution for CARA utility.
  Following the notation in Section~\ref{sec:illiq_numerical}, $\meanhashr$ denotes the output of the previous iteration, $\Psi(\meanhashr)$ denotes the fixed point mapping output of the current iteration, $\meanhashr^\mathrm{new}$ is the relaxed output \eqref{eqn:relaxed_mean_hash_rate}, and $\meanhashr^\mathrm{explicit}$ denotes the explicit solution from Proposition~\ref{base case exponential utility}.
  The error remaining in the last column is due to the discretization error of the PDEs and decreases with the grid size.}}
\end{table}

\subsection{Power Utility (CRRA)\label{sec:illiquid}}%
In this section, we consider the mining problem where miners have CRRA utility functions $U$, defined on $\Rnneg$, namely
\begin{equation}
\label{mining power utility}
U(x) = \frac{1}{1-\gamma}x^{1-\gamma} ~~~\text{for}~\gamma \in (0,1),\quad x>0,
\end{equation}
and admissible strategies $\alpha$ are such that the wealth process $X$ remains positive.
The HJB equation \eqref{base HJB after optimization} holds on $x > 0$ with the boundary condition $V(t, 0) = U(0) = 0$.
We naturally assume that the initial density $m_0$ has strictly positive support, so all the miners start with positive wealth.
This means that the problem is fully characterized on $\Rnneg$.

In contrast to the case of exponential utility in Section~\ref{sec:liquid}, 
the value function cannot be found explicitly, even with power utility, so we must solve \eqref{base HJB after optimization} numerically.
The mean hash rate is still given by \eqref{mean hash rate base case}, while the Fokker--Planck equation has two parts to account for the fact that miners' wealths stay strictly positive.
The density $m$ solves \eqref{base fokker planck equation} (at least in a weak sense) for $x>r$.
On the other hand, if $0<x<r$, there is no density at $x-r$ jumping to $x$, because there are no miners with negative wealth.
So, for $0 < x < r$, the density $m$ solves
\begin{equation}\label{eqn:fokkerplanck_illiq_smallx}
\partial_t m  - \partial_x (c\alpha^{*}m) + \frac{\K\alpha^{*}}{(\alpha^{*}+M\eqmeanhashr)}m = 0,
\end{equation}
where the optimal individual hash rate is given by \eqref{base optimal hash rate}.
The initial condition on all $x>0$ is $m(t_{0},x) = m_{0}(x)$.

\begin{remark}
    The Fokker--Planck equation can be verified to preserve mass on $\Rnneg$, i.e., for all $t \in [t_0, T]$, $\int_{\Rnneg} m(t, x) dx = 1$.
    Indeed,
	\begin{align}
	\partial_{t}\int_{\Rnneg}m(t,x)dx  &= \int_{\Rnneg}\partial_t m(t,x)\,dx, \\ 
    &= c\alpha^{*}(t, \cdot) m(t, \cdot) |_{0}^{r} - \int_{0}^{+\infty}\frac{\K\alpha^{*}}{(\alpha^{*}+M\eqmeanhashr_{t})}m\,dx + c\alpha^{*}(t, \cdot)m(t, \cdot) |_{r}^{+\infty}\\
    &\quad  + \int_{r}^{+\infty}\frac{\K\alpha^{*}(t,x-r)}{(\alpha^{*}(t,x-r)+M\eqmeanhashr_{t})}m(t,x-r)\,dx, \\
    &= c\alpha^{*}(t, \cdot) m(t, \cdot) |_{0}^{+\infty} = 0,
	\end{align}
	where the second integral term cancels with the first by making the change of variable $x'=x-r$.
	The last equation holds because no one can obtain infinite wealth in finite time, and the condition $\alpha^{*}(t,0^+) = 0$ holds. 
\end{remark}

\subsubsection{Concentration of wealth and mining effort}
\label{sec:concentration_of_wealth}
In this section, we numerically solve for the equilibrium with power utility functions \eqref{mining power utility}.
This structure leads to strategic decisions of the miners that are very different from those found with exponential utility, as those with larger wealth tend to hash more.
This also puts competitive pressure on lower wealth miners, and as a consequence, the wealthier miners receive a disproportionately large share of profits, a form of \emph{preferential attachment}.

Preferential attachment effects appears in many situations: scientific citation networks \citep{Barabsi2002}, language use \citep{Perc2012}, distribution of cities by population and distributions of incomes by size \citep{Simon1955}.
A recent empirical study points out that it also appears in the Bitcoin network \citep{Kondor2014}: 
``we find that the wealth of already rich nodes increases faster than the wealth of nodes with low balance.''
This empirical observation is consistent with our model's numerical results.

Figure \ref{fig:The distribution of miners' wealth.} shows the distribution of the miners' wealth at $t= 30, 45, 60, 90$ compared with the initial distribution.
As time increases, the majority of the mass moves to the left, gradually forming a significant spike.
At the same time, there is small part of the mass moving to the right. This indicates that most miners lose their wealth, but those who have relatively more money originally accumulate wealth over time.

\begin{figure}[htbp]
	\centering
    \newcommand{\axissettings}{xlabel=$x$, ylabel = $m$, enlarge x limits=0, ytick={0}, ylabel style={yshift=-0.5cm}}
    \newcommand{\legendsettings}{legend cell align={left}, legend image post style={scale=0.5}, legend style={nodes={scale=1.2}}}
        \begin{tikzpicture}
            \begin{axis}[width=0.85\textwidth, \axissettings,\legendsettings]
                \addplot[teal, loosely dashed, line width=0.6pt, restrict x to domain=60:140] table [x=x, y=m0, col sep=comma] {Data/m.csv};
                \addlegendentry{$t=0$}
                \addplot[orange, line width=0.6pt, restrict x to domain=60:140] table [x=x, y=m30, col sep=comma] {Data/m.csv};
                \addlegendentry{$t=30$}
                \addplot[red, densely dotted, line width=0.6pt, restrict x to domain=60:140] table [x=x, y=m45, col sep=comma] {Data/m.csv};
                \addlegendentry{$t=45$}
                \addplot[blue, densely dashed, line width=0.6pt, restrict x to domain=60:140] table [x=x, y=m60, col sep=comma] {Data/m.csv};
                \addlegendentry{$t=60$}
                \addplot[darkgray, loosely dotted, line width=0.6pt, restrict x to domain=60:140] table [x=x, y=m90, col sep=comma] {Data/m.csv};
                \addlegendentry{$t=90$}
            \end{axis}
        \end{tikzpicture}
	\caption{\small{\em The distribution of miners' wealth at different times. Parameters: $\K^{-1} = 0.007$, $r=3$, $c=2\times 10^{-5}$, $T=90$, $\gamma=0.8$, $M=1000$.
    The initial distribution $m_{0}$ is normal with mean $90$ and standard deviation $5$.
	  Note that as time passes, there is mass concentrating on the left hand side of the peak at $x = x_{\mathrm{off}} \approx 77$, cf.\ Proposition~\ref{cutoff of the wealth level}.
	  In light of Proposition~\ref{cutoff of the wealth level}, this is expected.
	  Below the peak, it is optimal for miners to not participate, whereas above they do.
	  Hence, miners to the right of $x_{\mathrm{off}}$ move to the left, whereas those at or below are stationary, causing concentration of the density around $x_{\mathrm{off}}$.
	  We emphasize that this does not cause numerical problems, because derivatives are only evaluated on one side of this point.
 }}
	\label{fig:The distribution of miners' wealth.}
\end{figure}

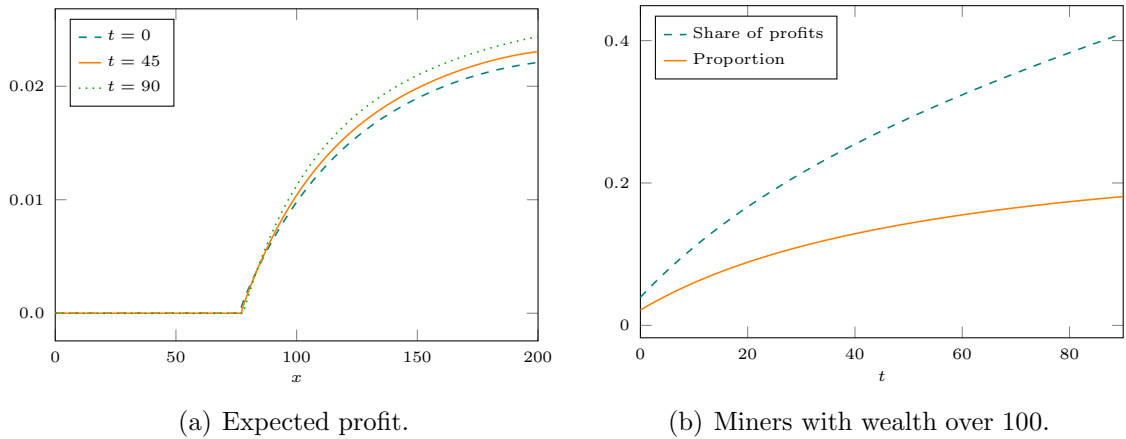
\begin{figure}
	\centering
    \newcommand{\axissettings}{width=8cm, height=6cm, enlarge x limits = 0}
    \newcommand{\legendsettings}{legend pos=north west, legend cell align={left}, legend image post style={scale=0.5}, legend style={nodes={scale=1.0}}}
	\subfigure[Expected profit.]{\label{fig:profitforx}
		\begin{tikzpicture}
            \begin{axis}[\axissettings,\legendsettings, 
            	y tick label style={
            	/pgf/number format/.cd,
            	precision=3,
            },
            yticklabel=\phantom{0}$\mathllap{\tick}$,
            legend pos=north west, xlabel=$x$, ylabel style={yshift=-0.5cm}, xlabel style={font=\tiny,at={(axis description cs:0.5,0.05)}},ylabel style={font=\tiny,at={(axis description cs:0.08,0.5)}}]
		\addplot[teal, dashed, line width=0.6pt] table [x=x, y=profit0, col sep=comma] {Data/profit.csv};
		\addlegendentry{$t=0$}
		\addplot[orange, line width=0.6pt] table [x=x, y=profit45, col sep=comma] {Data/profit.csv};
		\addlegendentry{$t=45$}
		\addplot[black!35!green, dotted, line width=0.6pt] table [x=x, y=profit90, col sep=comma] {Data/profit.csv};
		\addlegendentry{$t=90$}
		\end{axis}
		\end{tikzpicture}
    }
	\subfigure[Miners with wealth over $100$.]{\label{fig:prob_profit_100}
		\begin{tikzpicture}
            \begin{axis}[\axissettings,\legendsettings, xlabel=$t$, xlabel style={font=\tiny,at={(axis description cs:0.5,0.05)}},ylabel style={font=\tiny,at={(axis description cs:0.08,0.5)}}]
		\addplot[teal, dashed, line width=0.6pt] table [x=t, y=profit, col sep=comma] {Data/prop_vs_profit.csv};
		\addlegendentry{Share of profits}
		\addplot[orange, line width=0.6pt] table [x=t, y=prop, col sep=comma] {Data/prop_vs_profit.csv};
		\addlegendentry{Proportion}
		\end{axis}
		\end{tikzpicture}
    }
    \caption{\small{\em 
        The left plot shows the expected instantaneous profit miners at different wealth levels and times.
        The right one gives the proportion of miners with wealth over 100 and their share of the total instantaneous profits.
        Parameters are the same as Figure \ref{fig:The distribution of miners' wealth.}.
    }}
	\label{fig:The probability of getting the reward.}
\end{figure}

From \eqref{wealth process for individual miner}, the expected profit $\E[\Delta X_t]$ for an individual miner over a short time $\Delta t$ is given, approximately by $\E[\Delta X_t]\approx - c\alpha^*_t\,\Delta t + r\lambda_t\Delta t$, where $\lambda_t$ is given by \eqref{eqn:reward_rate}.
Figure \ref{fig:profitforx} shows the instantaneous profit rate, namely
\begin{equation}
    -c \alpha^*(t,x) + r\K\frac{\alpha^*(t, x)}{\alpha^*(t, x) + M \eqmeanhashr_t},	\label{instprofrate}
\end{equation}
as a function of miners' wealths at various times. 
We see that the more wealth a miner has, the higher is their expected profit rate.
Miners with lower wealth hash at lower rates or even zero rate below $x_{\mathrm{off}}$ introduced in Proposition~\ref{cutoff of the wealth level} below.
This pattern holds at all times.
In addition, it is interesting to see that the miners are blockaded ($0$ hash rate) around and below wealth level $77$ (see Figure \ref{fig:profitforx}).
Their risk aversion prevents miners with small wealth from participating in the mining game.
This is explained by the following lemma, whose proof is provided in Appendix~\ref{sec:proofs}.
\begin{proposition}
	\label{cutoff of the wealth level}
    For any time $t$ and equilibrium hash rate $\eqmeanhashr_t > 0$, there exists a wealth level $x_{\mathrm{off}}(t) > 0$ such that $\alpha^*(t, x) = 0$ for $x \leq x_{\mathrm{off}}(t)$, i.e., the optimal mining rate is zero at wealth levels below $x_{\mathrm{off}}$.
\end{proposition}

To better understand the preferential attachment effect, we compute the proportion of miners whose wealth is over $100$ and their share of the instantaneous profits,
both of which are shown in Figure \ref{fig:prob_profit_100}.
The proportion increases from around $2\%$ to $18\%$, while the share of profits rises from $4\%$ to $41\%$.
Hence, as time goes by, the wealthy receive an increasingly large share of the profits.

We have shown ``the rich get richer'' phenomenon in the mining game.
It is also interesting to see how the reward $r$ and competition parameter $M$ affect this phenomenon. 
To avoid repetition, we show only the plots at $t = 30$, but a similar pattern is present also at other times.

\begin{figure}[htbp]
	\centering
    \newcommand{\axissettings}{width=8cm, height=6cm, enlarge x limits = 0}
    \newcommand{\legendsettings}{legend cell align={left}, legend image post style={scale=0.5}, legend style={nodes={scale=1.0}}}
	\subfigure[Density comparison ($r=2, 3$).]{\label{fig:t30p23}
		\begin{tikzpicture}
            \begin{axis}[\axissettings,\legendsettings, xlabel=$x$, ylabel = $m$, ytick={0}, ylabel style={yshift=-0.5cm}, xlabel style={font=\tiny,at={(axis description cs:0.5,0.05)}},ylabel style={font=\tiny,at={(axis description cs:0.08,0.5)}}]
		\addplot[teal, dashed, line width=0.6pt, restrict x to domain=60:140] table [x=x, y=m_p2, col sep=comma] {Data/m30p2345.csv};
		\addlegendentry{$r=2$}
		\addplot[orange, line width=0.6pt, restrict x to domain=60:140] table [x=x, y=m_p3, col sep=comma] {Data/m30p2345.csv};
		\addlegendentry{$r=3$}
		\end{axis}
		\end{tikzpicture}
    }
	\subfigure[Density comparison ($r=3, 4$).]{\label{fig:t30p34}
		\begin{tikzpicture}
            \begin{axis}[\axissettings,\legendsettings, xlabel=$x$, ylabel = $m$, ytick={0}, ylabel style={yshift=-0.5cm}, xlabel style={font=\tiny,at={(axis description cs:0.5,0.05)}},ylabel style={font=\tiny,at={(axis description cs:0.08,0.5)}}]
		\addplot[teal, dashed, line width=0.6pt, restrict x to domain=60:140] table [x=x, y=m_p3, col sep=comma] {Data/m30p2345.csv};
		\addlegendentry{$r=3$}
		\addplot[orange, line width=0.6pt, restrict x to domain=60:140] table [x=x, y=m_p4, col sep=comma] {Data/m30p2345.csv};
		\addlegendentry{$r=4$}
		\end{axis}
		\end{tikzpicture}
    }
	\subfigure[Density comparison ($r=4, 5$).]{\label{fig:t30p45} 
		\begin{tikzpicture}
            \begin{axis}[\axissettings,\legendsettings, xlabel=$x$, ylabel = $m$, ytick={0}, ylabel style={yshift=-0.5cm}, xlabel style={font=\tiny,at={(axis description cs:0.5,0.05)}},ylabel style={font=\tiny,at={(axis description cs:0.08,0.5)}}]
		\addplot[teal, dashed, line width=0.6pt, restrict x to domain=60:140] table [x=x, y=m_p4, col sep=comma] {Data/m30p2345.csv};
		\addlegendentry{$r=4$}
		\addplot[orange, line width=0.6pt, restrict x to domain=60:140] table [x=x, y=m_p5, col sep=comma] {Data/m30p2345.csv};
		\addlegendentry{$r=5$}
		\end{axis}
		\end{tikzpicture}
    }
	\subfigure[Expected profit.]{\label{fig:prof30p2345} 
		\begin{tikzpicture}
            \begin{axis}[\axissettings,\legendsettings,
                y tick label style={
                    /pgf/number format/.cd,
                    precision=3,
                },
                yticklabel=\phantom{0}$\mathllap{\tick}$,
                legend pos=north west, xlabel=$x$, ylabel style={yshift=-0.5cm}, xlabel style={font=\tiny,at={(axis description cs:0.5,0.05)}},ylabel style={font=\tiny,at={(axis description cs:0.08,0.5)}}]
        \addplot[teal, dashed, line width=0.6pt] table [x=x, y=profit_p2, col sep=comma] {Data/profit30p2345.csv};
		\addlegendentry{$r=2$}
		\addplot[orange, line width=0.6pt] table [x=x, y=profit_p3, col sep=comma] {Data/profit30p2345.csv};
		\addlegendentry{$r=3$}
		\addplot[black!35!green, dotted, line width=0.6pt] table [x=x, y=profit_p4, col sep=comma] {Data/profit30p2345.csv};
		\addlegendentry{$r=4$}
		\addplot[black!35!red, loosely dashed, line width=0.6pt] table [x=x, y=profit_p5, col sep=comma] {Data/profit30p2345.csv};
		\addlegendentry{$r=5$}
		\end{axis}
		\end{tikzpicture}
    }
	\caption{\small{\em The price effects at $t = 30$. The first three plots show the distribution of miners' wealth.
        The last one shows the expected instantaneous profit of miners at different wealth levels, for four different price levels.
    Parameters are the same as Figure \ref{fig:The distribution of miners' wealth.} except that $r$ takes multiple values.}}
	\label{fig:price effect at 30.}
\end{figure}

A larger reward $r$ exacerbates the degree of preferential attachment.
The density plots in Figure \ref{fig:price effect at 30.} show that those with lower wealth tend to lose money faster when the price is higher.
At the same time, the density for $x\geq110$ is clearly higher for the larger reward in Figures \ref{fig:price effect at 30.} (a)(b)(c).
Figure \ref{fig:prof30p2345} shows the expected instantaneous profits \eqref{instprofrate}, which leads to the same conclusion.

\begin{figure}[htbp]
	\centering
    \newcommand{\axissettings}{width=8cm, height=6cm, enlarge x limits = 0}
    \newcommand{\legendsettings}{legend cell align={left}, legend image post style={scale=0.5}, legend style={nodes={scale=1.0}}}
	\subfigure[Density comparison ($M=1K, 2K$).]{\label{fig:t30M12}
		\begin{tikzpicture}
            \begin{axis}[\axissettings,\legendsettings, xlabel=$x$, ylabel = $m$, ytick={0}, ylabel style={yshift=-0.5cm}, xlabel style={font=\tiny,at={(axis description cs:0.5,0.05)}},ylabel style={font=\tiny,at={(axis description cs:0.08,0.5)}}]
		\addplot[teal, dashed, line width=0.6pt, restrict x to domain=60:140] table [x=x, y=m_M1, col sep=comma] {Data/m30M12310.csv};
		\addlegendentry{$M=1000$}
		\addplot[orange, line width=0.6pt, restrict x to domain=60:140] table [x=x, y=m_M2, col sep=comma] {Data/m30M12310.csv};
		\addlegendentry{$M=2000$}
		\end{axis}
		\end{tikzpicture}
    }
	\subfigure[Density comparison ($M=2K, 3K$).]{\label{fig:t30M23}
		\begin{tikzpicture}
            \begin{axis}[\axissettings,\legendsettings, xlabel=$x$, ylabel = $m$, ytick={0}, ylabel style={yshift=-0.5cm}, xlabel style={font=\tiny,at={(axis description cs:0.5,0.05)}},ylabel style={font=\tiny,at={(axis description cs:0.08,0.5)}}]
		\addplot[teal, dashed, line width=0.6pt, restrict x to domain=60:140] table [x=x, y=m_M2, col sep=comma] {Data/m30M12310.csv};
		\addlegendentry{$M=2000$}
		\addplot[orange, line width=0.6pt, restrict x to domain=60:140] table [x=x, y=m_M3, col sep=comma] {Data/m30M12310.csv};
		\addlegendentry{$M=3000$}
		\end{axis}
		\end{tikzpicture}
    }
	\subfigure[Density comparison ($M=3K, 10K$).]{\label{fig:t30M310}
		\begin{tikzpicture}
            \begin{axis}[\axissettings,\legendsettings, xlabel=$x$, ylabel = $m$, ytick={0}, ylabel style={yshift=-0.5cm}, xlabel style={font=\tiny,at={(axis description cs:0.5,0.05)}},ylabel style={font=\tiny,at={(axis description cs:0.08,0.5)}}]
		\addplot[teal, dashed, line width=0.6pt, restrict x to domain=60:140] table [x=x, y=m_M3, col sep=comma] {Data/m30M12310.csv};
		\addlegendentry{$M=3000$}
		\addplot[orange, line width=0.6pt, restrict x to domain=60:140] table [x=x, y=m_M10, col sep=comma] {Data/m30M12310.csv};
		\addlegendentry{$M=10000$}
		\end{axis}
		\end{tikzpicture}
    }
	\subfigure[Expected profit.]{\label{fig:profit30M12310}
		\begin{tikzpicture}
            \begin{axis}[\axissettings,\legendsettings,
                y tick label style={
                    /pgf/number format/.cd,
                    precision=3,
                },
                yticklabel=\phantom{0}$\mathllap{\tick}$,
                legend pos=north west, xlabel=$x$, ylabel style={yshift=-0.5cm}, xlabel style={font=\tiny,at={(axis description cs:0.5,0.05)}},ylabel style={font=\tiny,at={(axis description cs:0.08,0.5)}}]
        \addplot[teal, dashed, line width=0.6pt] table [x=x, y=profit_M1, col sep=comma] {Data/profit30M12310.csv};
		\addlegendentry{$M=1000$}
		\addplot[orange, line width=0.6pt] table [x=x, y=profit_M2, col sep=comma] {Data/profit30M12310.csv};
		\addlegendentry{$M=2000$}
		\addplot[black!35!green, dotted, line width=0.6pt] table [x=x, y=profit_M3, col sep=comma] {Data/profit30M12310.csv};
		\addlegendentry{$M=3000$}
		\addplot[black!35!red, loosely dashed, line width=0.6pt] table [x=x, y=profit_M10, col sep=comma] {Data/profit30M12310.csv};
		\addlegendentry{$M=10000$}
		\end{axis}
		\end{tikzpicture}
    }
	\caption{\small{\em 
        The competition effects at $t = 30$. The first three plots show the distribution of miners' wealth.
        The last one shows the expected instantaneous profit for miners at different wealth levels, for four different competition levels.
        Parameters are the same as Figure \ref{fig:The distribution of miners' wealth.} except that $M$ takes multiple values. }}
	\label{fig: competition effect at 30.}
\end{figure}
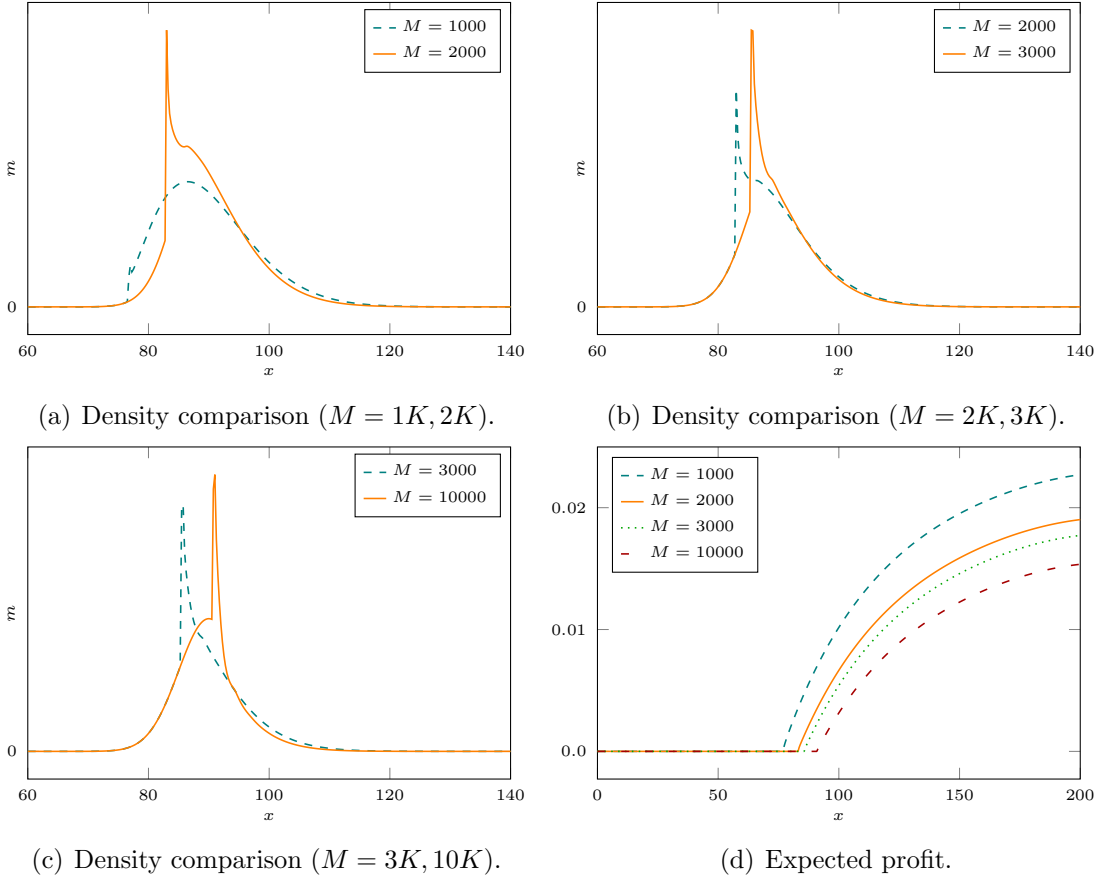
The competition parameter $M$ reduces the preferential attachment effect.
This is shown in Figures \ref{fig: competition effect at 30.} (a)(b)(c), as the density for $x \geq 100$ is lower for larger $M$.
Additionally, the
expected profit rate \eqref{instprofrate} decrease with respect to $M$, as is shown in Figure \ref{fig:profit30M12310}.
Meanwhile, as the competition becomes fierce and the participation threshold for miners to be active becomes larger.
When $M=1000$, miners need wealth around $75$ to have incentive to hash, but this increases to about $90$ for $M=10000$.
Hence, the competition makes the mining game less lucrative, and makes it harder for miners to stay active, which reduces the preferential attachment.

\section{Competition with cost advantages}\label{sec:cost_adv}
In this section, we consider a model in which one miner has cost advantages over the rest.
This could be due to access to cheaper energy or more advanced equipment,
and helps the miner become dominant in the mining game.
Bitmain is one example of an advantaged miner.
It utilizes cheaper electricity in China, like the hydropower stations in Sichuan during the rainy season,
and also Hydro Quebec in Canada, which offers some of the lowest electricity rates in North America.\footnote{\url{https://www.reuters.com/article/us-canada-bitcoin-china/chinese-bitcoin-miners-eye-sites-in-energy-rich-canada-idUSKBN1F10BU}}
The model studied in this section suggests that cost advantages can be a contributing factor in the centralization observed in Bitcoin mining, which is dominated by a few large entities, as is illustrated in Figure \ref{fig: Bitcoin hash rate distribution.}.

Because the advantaged miner is distinct from the other miners, 
 the general structure of Section~\ref{sec:model} is adapted in Section \ref{sec:cost_adv_problem}.
As in the earlier model, we are able to obtain the equilibrium explicitly when the smaller miners have exponential utility (Section 
\ref{sec:cost_adv_exp_liq}), and we can demonstrate wealth effects numerically when they have power utility (Section \ref{sec:cost_adv_crra}).

\begin{figure}[htbp]
	\centering
	\begin{tikzpicture}
	\small
	\pie[explode={0.2, 0.2, 0.2, 0.2, 0.2, 0.2, 0.2, 0.2, 0.2}]{20.7/BTC.com, 13.5/F2Pool, 12.4/AntPool, 9.7/Poolin, 9.1/BTC.TOP, 8.5/SlushPool, 7.4/ViaBTC, 5.0/BitFury, 13.7/Others}
	\end{tikzpicture}
	\caption{\small{\em Bitcoin hash rate distribution among the largest mining pools. The data is obtained on 06/30/2019 from \url{https://www.blockchain.com/pools}.}}
	\label{fig: Bitcoin hash rate distribution.}
\end{figure}
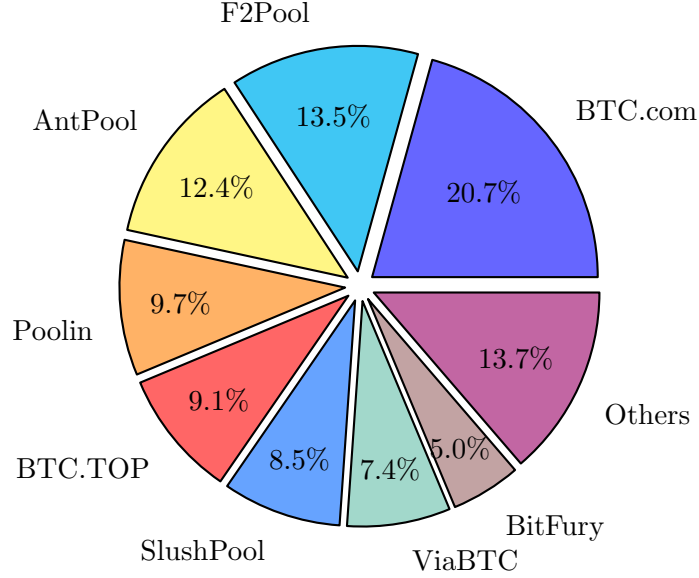

\subsection{Problem formulation}
\label{sec:cost_adv_problem}
We consider a cost-advantaged miner competing with $M+1$ individual miners
who are approximated, as in Section \ref{general_structure}, by a continuum of miners.\footnote{This type of competition between an individual and a continuum of payers is related to so-called major-minor mean field games, see e.g.\ \cite{huang_major_minor}. However, the introduction of our parameter $M$ to approximate an aggregate in terms of a mean implies that the so-called minor players are not really minor.}
This miner chooses a hash rate $\beta_{t}$,
and incurs cost $c_{1}\beta_{t} = k_c \costr{\beta_{t}}$ per hash,
where $0 < k_c \leq 1$ is the relative cost efficiency.
Given the (continuum) mean hash rate $\meanhashr_{t}$ of the individual miners,  the counting process $N^{1}_{t}$ with intensity
\[
    \rewardr^1_t = \frac{\K\beta_{t}}{(\beta_{t}+(M+1)\meanhashr_{t})}
\]
denotes the number of rewards received by the cost-advantaged miner.

We think of the advantaged miner as a profit-maximizing firm, or, more generally, having preferences described by a utility function $U_1(x)$ coupled with the appropriate set of admissible hash rate strategies.
The objective for the cost-advantaged miner is 
\begin{equation}
\label{advanced miner value function}
\sup_{\beta_{t} \geq 0}\E{\left[U_1\left(\int_{0}^{T}-c_{1}\beta_{t} dt+r\,dN^{1}_{t}\right)\right]}.
\end{equation}

As in Section \ref{general_structure}, the model for individual miners remains the same except that the intensity for $N_{t}$ in \eqref{wealth process for individual miner} becomes
\[
    \rewardr_t = \frac{\K\alpha_{t}}{(\alpha_{t}+M\meanhashr_{t}+\beta_t)},
\]
given the cost-advantaged miner's hash rate $\beta_{t}$.
Here the denominator consists of both the cost-advantaged miner's hash rate and the total of individual miners.

In this generality, we would need an additional state variable for the wealth of the cost-advantaged minor, as well as their value function, increasing the dimension and complexity of the PDE system. However, in the case where the players have exponential utility (or are risk-neutral), the equilibrium hash rates are wealth-independent, and we can solve explicitly (Section \ref{sec:cost_adv_exp_liq}). We can further handle numerically the case of the individual miners having power utility when the advantaged miner is profit-maximizing, i.e.\ risk-neutral, (Section \ref{sec:cost_adv_crra}), in which case we do not need an additional state variable.
In either of these cases then, any Markov control can be identified by a function $\beta(t; \meanhashr)$, i.e., the control only depends on time.

\subsubsection{HJB equation}\label{sec:cost_adv_individual}
The value function defined as in \eqref{base value function} now depends on both $\meanhashr$ and $\beta$, i.e.,
$v(t_{0}, x; \meanhashr, \beta)$.
For a fixed choice of $\meanhashr>0$ and $\beta\geq 0$, the HJB can be written as
\begin{equation}
\partial_t v + \sup_{\alpha\geq0}\left(-\costr{\alpha} \partial_x v + \frac{\K\alpha}{(\alpha+M\meanhashr_{t}+\beta_{t})}\Delta v\right) = 0,
\end{equation}
with terminal condition $v(T,x) = U(x)$.
Like Lemma \ref{property of the utility function base case}, it can be proved that $v$ is strictly increasing in $x$.
Hence, the maximizer is
\begin{equation}
\label{optimal individual hash rate in the advanced miner}
\alpha^{*}(t,x;\meanhashr, \beta)  = \left\{
\begin{aligned}
&-(M\meanhashr_{t}+\beta_{t})+\sqrt{\frac{(M\meanhashr_{t}+\beta_{t})\K\Delta v}{c\partial_x v}}, ~~~&&\text{if}~~~ M\meanhashr_{t} + \beta_{t} < \frac{\K\Delta v}{c\partial_x v}, \\
&0 &&\text{otherwise},
\end{aligned}
\right.
\end{equation}
and the HJB equation is
\begin{equation}
\left\{
\begin{aligned}
&\partial_t v + \left(\sqrt{c(M\meanhashr_{t}+\beta_{t})\partial_x v}-\sqrt{\K\Delta v}\right)^{2} = 0, ~~~&&\text{if}~~~ M\meanhashr_{t} + \beta_{t} < \frac{\K\Delta v}{c\partial_x v},\\
\label{HJB in the advanced miner case}
&\partial_t v = 0 &&\text{otherwise}.
\end{aligned}
\right.
\end{equation}

\subsubsection{Equilibrium characterization}\label{sec:cost_adv_equilibrium}
Let $m(t,x;\meanhashr, \beta)$ be the resulting density, corresponding to the optimal hash rate $\alpha^{*}(t,x;\meanhashr, \beta)$ of individual miners.
We say that $\eqmeanhashr$ and $\beta^{*}$ form an \emph{equilibrium} of the mining game with a cost-advantaged miner if 
\begin{equation}
\eqmeanhashr_t = \int_{\R}\opthashr(t,x; \eqmeanhashr,\beta^{*})m(t,x; \eqmeanhashr,\beta^{*})\,dx, ~~~\forall t \in [t_{0},T],
\end{equation}
and $\beta^{*}_{t} = \beta^{*}(t; \eqmeanhashr)$, from \eqref{advanced miner value function}.
Henceforth, let $\eqmeanhashr$ and $\beta^{*}$ denote the equilibrium mean hash rate and equilibrium hash rate for the cost-advantaged miner, $v(t, x) = v(t, x; \eqmeanhashr,\beta^{*})$, $\opthashr(t, x) = \opthashr(t, x; \eqmeanhashr,\beta^{*})$, and $m(t, x) = m(t, x; \eqmeanhashr,\beta^{*})$.

By the same argument presented in Section~\ref{sec:equilibrium}, it is meaningful to consider $\eqmeanhashr_t > 0$ for all $t$. 
Thus in the equilibrium,
we have coupled equations $\beta^{*}_{t} = \beta^{*}(t; \eqmeanhashr)$ and
\begin{equation}
\eqmeanhashr_{t} = -\eta(t)(M\eqmeanhashr_{t}+\beta^{*}_{t}) + \sqrt{(M\eqmeanhashr_{t}+\beta^{*}_{t})}\int_{E_t}\sqrt{\frac{\K\Delta v(t,x)}{c\partial_x v(t,x)}}m(t,x)\,dx,
\end{equation}
by integrating \eqref{optimal individual hash rate in the advanced miner} in $x$ over the set \eqref{level set for active miners}.
The Fokker-Planck equation is given by
\begin{equation}
\partial_t m - \partial_x (c\alpha^{*}(t,x)m) -\K\left(\frac{\alpha^{*}(t,x-r)}{\alpha^{*}(t,x-r)+M\eqmeanhashr_{t}+\beta^{*}_{t}}m(t,x-r)-\frac{\alpha^{*}(t,x)}{\alpha^{*}(t,x)+M\eqmeanhashr_{t}+\beta^{*}_{t}}m(t,x)\right) = 0,
\end{equation}
with initial distribution $m(t_{0},x) =m_{0}(x)$. 

\subsection{Exponential and risk-neutral utility miners: explicit solution}\label{sec:cost_adv_exp_liq}
Define
\begin{equation}
    \label{eqn:w}
    w(r;\gamma)=\left\{\begin{aligned}
    &\frac{(1-e^{-\gamma r})}{\gamma}\qquad & \gamma>0,\\
    & r & \gamma=0.
    \end{aligned}
    \right.
\end{equation}
When the higher cost miners have exponential utility ($\gamma > 0$) or are risk-neutral ($\gamma = 0$), and the cost-advantaged miner has exponential utility, or is risk-neutral ($U_1(x)=x$), we have the following result.
\begin{proposition}\label{prop:cost_adv_exp_liq}
    Suppose the individual miners have exponential utility $U(x) = -\frac{1}{\gamma}e^{-\gamma x}$ with $\gamma \geq 0$ and the cost-advantaged miner either i) has exponential utility with risk-aversion coefficient $\gamma_1>0$; or ii) is risk-neutral ($\gamma_1=0$).
    Suppose
	the relative cost efficiency satisfies
	\begin{equation}
	\label{cost efficiency of the advanced miner}
    k_{c} < \frac{w(r;\gamma_1)}{w(r;\gamma)} \frac{M+1}{M},
	\end{equation}
    and let
	\begin{equation}
	{\kappa} = \frac{\K w(r;\gamma)}{c}, \qquad \kappa_{1} = \frac{(M+1)\K w(r;\gamma_1)}{c_{1}}.
	\end{equation}
    Then, in equilibrium, all miners are active with
	\begin{equation}
	\label{equilibrium solution for the advanced miner and individuals}
	\alpha^{*}(t,x) \equiv \eqmeanhashr_{t} \equiv \frac{{\kappa}^{2}\kappa_{1}}{({\kappa}+\kappa_{1})^{2}} > 0, ~~~\beta^{*}_{t} \equiv \frac{{\kappa}\kappa_{1}(\kappa_{1}-M{\kappa})}{({\kappa}+\kappa_{1})^{2}} > 0,
	\end{equation}
	for all $t\in[t_{0},T]$ and $x\in \R$.
\end{proposition}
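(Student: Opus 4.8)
The plan is to mirror the proof of Proposition~\ref{base case exponential utility}, exploiting that exponential utility renders the individual miner's problem separable with a wealth-independent optimal hash rate. First I would guess $v(t,x) = U(x)h(t)$. With $U(x) = -\frac{1}{\gamma}e^{-\gamma x}$ this gives $\Delta v = h(t)\frac{1}{\gamma}e^{-\gamma x}(1-e^{-\gamma p})$ and $\partial_x v = h(t)e^{-\gamma x}$, so that
\[
    \frac{\Delta v}{Dc\,\partial_x v} = \frac{1-e^{-\gamma p}}{Dc\gamma} = \kappa_1,
\]
independent of both $x$ and $t$. Substituting into the simplified HJB \eqref{HJB in the advanced miner case} leaves an ODE in $t$ for $h$ alone, validating the ansatz. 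The decisive consequence is that the individual optimal rate \eqref{optimal individual hash rate in the advanced miner} depends only on the aggregate $M\eqmeanhashr_t + \beta^*_t$ and on $\kappa_1$, hence is independent of $x$; assuming all miners are active ($\eta \equiv 1$), the individual equilibrium condition collapses to $\eqmeanhashr_t = \opthashr(t,x)$.

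Next I would solve the resulting coupled fixed point algebraically. Writing $s_t = M\eqmeanhashr_t + \beta^*_t$, the individual relation is $\eqmeanhashr_t = -s_t + \sqrt{\kappa_1 s_t}$, while the advanced miner's best response \eqref{optimal lambda p}, with $\kappa_2 = (M+1)p/(Dc_1)$, reads $\beta^*_t = -(M+1)\eqmeanhashr_t + \sqrt{\kappa_2\eqmeanhashr_t}$. Adding $M\eqmeanhashr_t$ to the latter gives $s_t = -\eqmeanhashr_t + \sqrt{\kappa_2\eqmeanhashr_t}$, and feeding this into the former produces the clean identity $\sqrt{\kappa_2\eqmeanhashr_t} = \sqrt{\kappa_1 s_t}$, i.e.\ $s_t = (\kappa_2/\kappa_1)\eqmeanhashr_t$. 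Equating the two expressions for $s_t$ and cancelling $\sqrt{\eqmeanhashr_t}$ --- legitimate under the standing assumption $\eqmeanhashr \neq 0$ recalled in Section~\ref{sec:cost_adv_equilibrium} --- reduces to a single linear equation in $\sqrt{\eqmeanhashr_t}$ with solution
\[
    \eqmeanhashr_t = \frac{\kappa_1^2\kappa_2}{(\kappa_1+\kappa_2)^2},
\]
and back-substitution yields $\beta^*_t = \kappa_1\kappa_2(\kappa_2 - M\kappa_1)/(\kappa_1+\kappa_2)^2$, matching \eqref{equilibrium solution for the advanced miner and individuals}. Because $\kappa_1,\kappa_2$ are constants, both controls are constant in $t$, which closes the ansatz.

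The step demanding the most care is confirming that the nonzero branches of \eqref{optimal individual hash rate in the advanced miner} and \eqref{optimal lambda p} are the active ones, so the formulas above are truly the equilibrium rather than a degenerate no-mining solution. For the individual miners the interior branch requires $s_t < \kappa_1$; substituting $s_t = \kappa_1\kappa_2^2/(\kappa_1+\kappa_2)^2$ this is equivalent to $\kappa_2^2 < (\kappa_1+\kappa_2)^2$, which holds trivially since $\kappa_1 > 0$, giving $\eta \equiv 1$. For the advanced miner the interior branch --- equivalently the positivity $\beta^*_t > 0$ --- requires $\eqmeanhashr_t < p/(c_1(M+1)D)$, which after substitution reduces to $M\kappa_1 < \kappa_2$; unwinding the definitions with $c_1 = k_c c$ shows this is precisely the hypothesis \eqref{cost efficiency of the advanced miner}. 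The genuine obstacle is thus conceptual rather than computational: one must check that the ansatz is globally self-consistent, so that the $t$-independence of $\kappa_1$ forces both equilibrium controls to be constant and the activity conditions hold uniformly on $[t_0,T]$, after which the explicit values follow from the elementary algebra above.
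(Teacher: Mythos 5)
Your proposal is correct and follows essentially the same route as the paper's proof: the separable ansatz $v(t,x)=U(x)h(t)$, the resulting wealth-independence of $\opthashr$ forcing $\eqmeanhashr_t = \opthashr_t$, and the algebraic solution of the coupled best-response equations together with \eqref{optimal lambda p}. You actually spell out the algebra and the verification of the interior-branch conditions (showing \eqref{cost efficiency of the advanced miner} is exactly equivalent to $M\kappa_1 < \kappa_2$, hence to $\beta^*>0$) more explicitly than the paper does, but the argument is the same.
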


\begin{proof}
	With the ansatz $v(t,x) = U(x)h(t)$, the HJB \eqref{HJB in the advanced miner case} reduces to solving
	\begin{equation}
	\left\{
	\begin{aligned}
	&\partial_{t}h - \gamma \left(\sqrt{c(M\eqmeanhashr_{t}+\beta^{*}_{t})}-\sqrt{\K w(r;\gamma)}\right)^{2}h = 0, ~~~&&\text{if}~~~ M\eqmeanhashr_{t} + \beta^{*}_{t}< \frac{\K}{c}w(r;\gamma),\\
	&\partial_{t}h = 0 &&\text{otherwise},
	\end{aligned}
	\right.
	\end{equation}
	with terminal condition $h(T) = 1$.
	Since $\eqmeanhashr$ and $\beta^{*}$ are only functions of $t$, this validates the ansatz.
    In looking for an equilibrium in which $\alpha^*_t > 0$ and $\beta^*_t > 0$, we use the non-zero best response $\beta^*$ in \eqref{optimal lambda p}, and, using the ansatz in \eqref{optimal individual hash rate in the advanced miner}, we find
	\begin{equation} \label{advanced miner vs individual lambda}
	\alpha^{*}(t,x;\eqmeanhashr, \beta^{*})  = -(M\eqmeanhashr_{t}+\beta^{*}_{t})+\sqrt{ \frac{\K}{c}w(r;\gamma)(M\eqmeanhashr_{t}+\beta^{*}_{t})}.
	\end{equation}
	Since $\alpha^{*}$ does not depend on the wealth $x$, all individual miners are active.
	Therefore, we have $\eqmeanhashr_{t} = \alpha^*_t$.
	This, together with \eqref{optimal lambda p}, yields \eqref{equilibrium solution for the advanced miner and individuals}.
    It is direct that $\alpha^*$ is positive, and $\beta^*$ is positive if and only if \eqref{cost efficiency of the advanced miner} holds.
    Thus we have found the equilibrium in which everyone is active.
\end{proof}

\subsubsection*{Cost advantage and its effect on mining power concentration}
Proposition~\ref{prop:cost_adv_exp_liq} demonstrates that the cost-advantaged miner's efficiency leads to centralization in the following sense.
It can be checked that the hash rate $\beta^{*}_{t}$ in \eqref{equilibrium solution for the advanced miner and individuals} is increasing in $\kappa_{1}$ and hence decreasing in $c_{1}$.
Similarly, the hash rate $\alpha^{*}$ in \eqref{equilibrium solution for the advanced miner and individuals} of the individual miners increases with respect to $c_{1}$.
Thus, a smaller $c_{1}$---a larger cost advantage---makes the advantaged miner more dominant.
As a consequence, individual miners with higher cost have to decrease their hash rates to regulate their risk exposure, as the advantaged miner gets a larger share of the mining rewards.

To quantify this,we write 
$\rho = k_{c} \frac{w(r;\gamma)}{w(r;\gamma_1)}$. 
Then $\kappa_{1} = (M+1){\kappa}/\rho$. The probability for the cost-advantaged miner to get the reward is 
\begin{equation}
\frac{\beta^{*}}{\beta^{*} + (M+1)\eqmeanhashr} = \frac{\kappa_{1}-M{\kappa}}{{\kappa}+\kappa_{1}} = \frac{(1-\rho)M+1}{M+1+\rho} \approx 1-\rho
\end{equation}
for large $M$, whereas the remaining miners have a collective success probability $\rho$ and individual probability $\rho/(M+1)$.

If $\gamma \approx \gamma_1$, this simplifies further because $\rho \approx k_c$, i.e., the advantaged miner's share of the market is approximately their advantage $1 - k_c$.
If the cost-advantaged miner is $10\%$ efficient ($k_{c} = 0.9$), then $\rho \approx 0.9$, which gives a probability around $10\%$ for the cost-advantaged miner to get the reward.

This analysis shows that the advantaged miner receives an disproportionately large share of block rewards.
But because this miner pays a lower cost per hash, the profit increases even further.
Both of these effects remain in the power utility setting of the next section and are explored in more detail in Figure~\ref{fig:kc_effect} and the accompanying text.

\subsection{Power utility}\label{sec:cost_adv_crra}
We now consider wealth effects by endowing the individual miners with power utility preferences \eqref{mining power utility}.
As discussed in the final paragraph of Section~\ref{sec:cost_adv_problem}, we assume the cost-advantaged miner is profit-maximizing, which keeps the problem tractable without the need to introduce an additional state variable.
Therefore \eqref{advanced miner value function} becomes
\begin{equation}
\label{advanced miner value function1}
\sup_{\beta_{t} \geq 0}\E{\left[\int_{0}^{T}-c_{1}\beta_{t} dt+r\,dN^{1}_{t}\right]}.
\end{equation}
Given $\meanhashr>0$, the maximizer in \eqref{advanced miner value function1} satisfies the first-order condition
\begin{equation}
\label{foc for the pool}
-c_{1} + \frac{r\K(M+1)\meanhashr_{t}}{(\beta_{t}+(M+1)\meanhashr_{t})^{2}} = 0,
\end{equation}
which yields the best response
\begin{equation}
\label{optimal lambda p}
\beta^{*}(t; \meanhashr) = \left\{
\begin{aligned}
&-(M+1)\meanhashr_{t} + \sqrt{\frac{r\K(M+1)\meanhashr_{t}}{c_{1}}}, ~~~&&\text{if}~~~ \meanhashr_{t}<\frac{r\K}{c_{1}(M+1)}, \\
&0, &&\text{otherwise}.
\end{aligned}
\right.
\end{equation}
Like in Section~\ref{sec:illiquid}, we solve the model numerically.
The numerical method correspondingly follows the procedure in Section~\ref{sec:illiq_numerical} with appropriate changes adding $\beta$.

\begin{figure}
	\centering
    \newcommand{\axissettings}{enlarge x limits = 0}
    \newcommand{\legendsettings}{legend cell align={left}, legend image post style={scale=0.5}, legend style={nodes={scale=1.2}}}
		\begin{tikzpicture}
            \begin{axis}[width=0.85\textwidth,\axissettings,\legendsettings, xlabel=$x$, ylabel = $m$, ytick={0}, ylabel style={yshift=-0.5cm}, xlabel style={font=\tiny,at={(axis description cs:0.5,0.05)}},ylabel style={font=\tiny,at={(axis description cs:0.08,0.5)}}]
		\addplot[gray, line width=0.6pt, restrict x to domain=70:130] table [x=x, y=m0, col sep=comma] {Data/mkc80.csv};
		\addlegendentry{$t=0$}

        \addplot[orange, loosely dotted, line width=0.6pt, restrict x to domain=70:130] table [x=x, y=m30, col sep=comma] {Data/m.csv};
        \addlegendentry{$t=30$, without adv.\ miner}
        \addplot[orange, densely dotted, line width=0.6pt, restrict x to domain=70:130] table [x=x, y=m45, col sep=comma] {Data/m.csv};
        \addlegendentry{$t=45$, without adv.\ miner}

		\addplot[teal, densely dashed, line width=0.6pt, restrict x to domain=70:130] table [x=x, y=m30, col sep=comma] {Data/mkc80.csv};
		\addlegendentry{$t=30$, with adv.\ miner}
		\addplot[teal, loosely dashed, line width=0.6pt, restrict x to domain=70:130] table [x=x, y=m45, col sep=comma] {Data/mkc80.csv};
		\addlegendentry{$t=45$, with adv.\ miner}
		\end{axis}
		\end{tikzpicture}
	\caption{\small{\em 
        The distribution of miners' wealth at $t=0,30,45$, with $k_c = 0.8$ for the distributions with an advantaged miner.
        The parameters are the same as in Figure~\ref{fig:The distribution of miners' wealth.}.}}
        \label{fig:kc80_densities}
\end{figure}

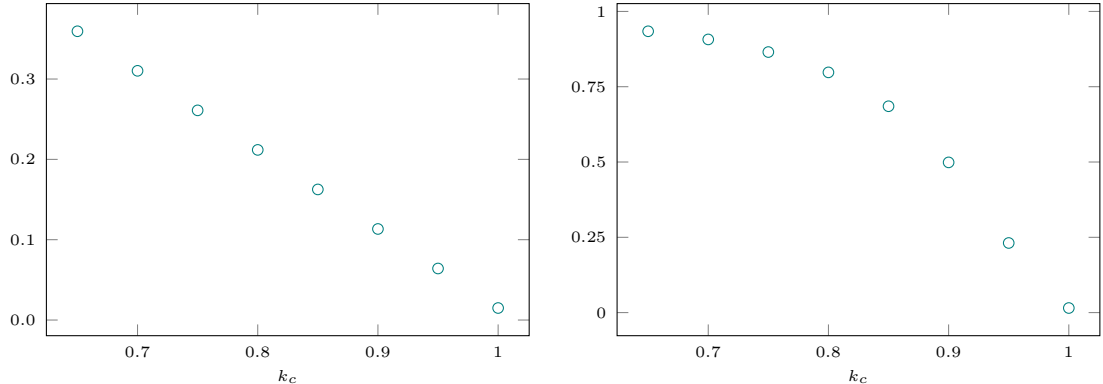
\begin{figure}
	\centering
    \newcommand{\axissettings}{width=8cm, height=6cm, enlarge x limits = 0}
    \newcommand{\legendsettings}{legend cell align={left}, legend image post style={scale=0.5}, legend style={nodes={scale=1.0}}}
	\subfigure[The advantaged miner's share of total rewards.]{\label{fig:kc_prob}
		\begin{tikzpicture}
            \begin{axis}[\axissettings,\legendsettings, enlarge x limits={abs=abs 1em}, 	
            y tick label style={
            	/pgf/number format/.cd,
            	precision=3,
            },
            yticklabel=\phantom{0}$\mathllap{\tick}$, xlabel=$k_c$, ylabel style={yshift=-0.5cm}, xlabel style={font=\tiny,at={(axis description cs:0.5,0.05)}},ylabel style={font=\tiny,at={(axis description cs:0.08,0.5)}}]
		\addplot[teal, only marks, mark=o] table [x=kc, y=prob, col sep=comma] {Data/prob30kc.csv};
		\end{axis}
		\end{tikzpicture}
    }
    \subfigure[The advantaged miner's share of total profits.]{\label{fig:kc_profit}
		\begin{tikzpicture}
            \begin{axis}[\axissettings,\legendsettings, enlarge x limits={abs=abs 1em}, 	
            y tick label style={
            	/pgf/number format/.cd,
            	precision=3,
            },
            ytick={0,0.25,0.5,0.75,1.0},
            xlabel=$k_c$, ylabel style={yshift=-0.5cm}, xlabel style={font=\tiny,at={(axis description cs:0.5,0.05)}},ylabel style={font=\tiny,at={(axis description cs:0.08,0.5)}}]
		\addplot[teal, only marks, mark=o] table [x=kc, y=profit_ratio, col sep=comma] {Data/profit_ratio30kc.csv};
		\end{axis}
		\end{tikzpicture}
    }
	\caption{\small{\em 
        Effect of cost efficiency, $k_c$.
        The first shows the advantaged miner's probability of getting the next reward, i.e., the expected share of total rewards.
        The last one shows the advantaged miner's expected instantaneous profits divided by the total instantaneous profits of all miners.
        For both figures, $t=30$.
        The parameters are the same as in Figure~\ref{fig:The distribution of miners' wealth.}.}}
        \label{fig:kc_effect}
\end{figure}

Figure \ref{fig:kc80_densities} shows the wealth distribution of individual miners when $k_{c} = 0.8$ for $t=0,30,45$.
Two time points from Figure~\ref{fig:The distribution of miners' wealth.} are also plotted, which show two subtle differences.
Recall from Section~\ref{sec:concentration_of_wealth} and Figure~\ref{fig:The distribution of miners' wealth.} that the left peak of each curve is the accumulation of miners at the point where mining is no longer optimal.
Most clearly visible \label{bit} is the right-shift of this peak, in the presence of an advantaged miner.
This means that the minimum wealth necessary for non-zero mining to be incentivized is greater with an advantaged miner, thus increasing centralization.
The other effect is the slightly slower dispersion of the distribution, which is explained by the lower mining rate across the board as a result of hesitancy due to the added competition from the advantaged miner.
Nevertheless, the type of evolution in Figure~\ref{fig:The distribution of miners' wealth.} is observable also in Figure~\ref{fig:kc80_densities}.

The effect of varying the cost efficiency $k_c$ is plotted in Figure~\ref{fig:kc_effect}.
Figure \ref{fig:kc_prob} shows the advantaged miner's probability of getting the next reward, which is also the share of the expected instantaneous reward.
Despite moving to power utility mining, the relationship is almost identical to that found analytically with exponential utility in Section~\ref{sec:cost_adv_exp_liq}:
The hash rate share taken by the advantaged miner is approximately $1 - k_c$.
We see that as the cost efficiency moves from $k_{c} = 1.0$ to $0.65$, the advantaged miner's hash rate increases from around $1\%$ to $35\%$.
This is a strong effect, and hence the cost advantages could be one explanatory factor for the concentration of mining power.
A similar idea also appears in \citep{Weinberg2018}.
They suggest that if a miner's cost is (e.g.) 10\% lower than those of other miners, then the miner must control at least 10\% of the total mining power.
\cite{Capponi2019} argue that miners invest in R\&D which allows them to develop more energy efficient mining equipment. Hence miners can have lower marginal cost and contribute higher hash rates.
As $M=1000$, the advantaged miner contributes a somewhat higher hash rate than the rest of the population---which is on the order of $1/M$---even for $k_c = 1$.
We attribute this small difference to the risk aversion, as these numbers are for $\gamma = 0.8$ and $\gamma_1 = 0$.

Figure~\ref{fig:kc_profit} plots the share of total profits for the advantaged miner.
At a $35\%$ cost advantage, i.e., $k_c = 0.65$, the advantaged miner reaps $93\%$ of the total profits generated, and $86\%$ of profits for $k_c = 0.75$.
This shows that most of the economic welfare in the system is received by a miner with a cost advantage.

\section{Conclusion and further research}\label{sec:conclusions}

This paper develops stochastic models for the mining of cryptocurrencies that implement a proof-of-work protocol.
As miners compete through their computation efforts---their hash rates---a rational utility maximization framework is analyzed as a novel mean field game of intensity control.
Remarkably, the equilibrium is found explicitly for certain utility functions---one of the very few explicit solutions of mean field games outside the linear quadratic framework.
In other cases, the equilibrium can be found numerically and efficiently by methods described in this paper.

Our main finding is that, as a result of initial wealth heterogeneity among the miners, more rewards tend to be collected by those who have more wealth, while miners with lesser wealth may be disincentivized from participating in mining activity at all.
This leads to a feedback effect of greater wealth and more concentrated mining. Concentration goes against the very fundamental principle of mining being decentralized, which is a requisite for the security of the cryptocurrency.

Incorporating a player with even a slight cost-advantage into our game shows that they become relatively dominant. This is consistent with how these professional miners with more advanced equipment or access to cheaper electricity have come to account for a significant share of mining power in recent years.

There are plenty of directions for future research that can build on the approach to cryptocurrency mining presented in this paper. We present some preliminary findings on three of these directions in the appendices, as we now describe.

At the end of 2010, the first mining pool was announced. Nowadays, most computational power comes from mining pools that are controlled by a few companies (see Figure \ref{fig: Bitcoin hash rate distribution.}). For instance, AntPool and BTC.com are run by Bitmain. Miners can join a pool, which collects their computational power to do the mining. Once the pool gets a reward, it is shared among the miners. 
Meanwhile, these companies also contribute a significant proportion of hash rates to their own pools.
Although pools are often public, meaning miners may participate and receive a share of pool revenue, the pool operators control what block data (the `work') is processed by its participants.
This means that a few entities account for selecting the block data (transactions) processed by a large share of hash rates in the world.
An initial discussion of how our model can be adapted to incorporate mining pools is presented in Appendix~\ref{pools}.

A notorious feature of cryptocurrencies, in particular bitcoin, is wild changes in their prices, which can run to speculative highs followed by sudden crashes. 
In Appendix~\ref{sec:stoch_price}, we explore the effect of stochastic rewards, for instance due to price fluctuations, in an extension of our model. 
In the case of exponential utility miners, we find miners react to the level but not the volatility of prices. 

We have assumed homogeneous costs and preferences in our model, which may be relaxed as we demonstrate (for costs) in Appendix~\ref{sec:cost_heterogeneity}. In general, costs would depend on miners' geographic location, access to silicon and hardware, etc.
Bill Tai of Hut 8 Mining Corp.\ has stated that big miners (like Bitfury) can buy at discount thanks to being able to ``buy silicon in large quantities and commit to the electricity grid in chunk sizes.''%
\footnote{\url{https://www.bloomberg.com/news/articles/2018-04-18/bitcoin-miners-facing-a-shakeout-as-profitability-becomes-harder}}
This suggests that per unit costs should be smaller for large miners.
Such concave costs would amplify the preferential attachment effects we find already in the linear costs model.
Another avenue for further research is to include fixed costs of entering the mining game, as motivated in \cite{garratt2020fixed}.

\begin{appendices}
\section{Proofs}\label{sec:proofs}
\subsection{Proof of Lemma \ref{property of the utility function base case}\label{proof2.1}}

\begin{proof}
	For the finiteness, by Jensen's inequality,
	\begin{equation}
	\mathbb{E}[U(X_{T})|X_{t} = x] \leq U(\mathbb{E}[X_{T}|X_{t} = x]) \leq U(x + r\mathbb{E}[N_{T} - N_{t} | X_{t} = x]) \leq U\left(x + r\K T\right) < \infty.
	\end{equation}
	
	For any $x_{1} < x_{2}$, let $\alpha^{i}_{t}$ and $X_{t}^{i}$ ~$(i =1,2)$ denote the optimal hash rate and corresponding wealth starting at time $t$ with initial wealth $x_{i}$.
	Consider the case where we start with $x_{2}$.
	We use $x_{1}$ as the wealth in the mining and save $x_{2}-x_{1}$.
	Then we use the hash rate $\alpha^{1}_{t}$. The corresponding wealth process is denoted by $X_{t}^{2,1}$. Thus we have $X_{t}^{2,1} \geq x_{2} - x_{1} +  X_{t}^{1} > X_{t}^{1}$. Thus,
	\begin{equation}
	v(t, x_{1}) < \mathbb{E}[U(X_{T}^{2,1})|X_{t}^{2,1} = x_{2}] \leq v(t, x_{2}).
	\end{equation}
\end{proof}

\subsection{Proof of Proposition \ref{cutoff of the wealth level}}
Since $\alpha$ can be chosen identically zero on $[t, T]$ and the optimal strategy does at least as well as doing nothing,
for any $\varepsilon > 0$, we have
\[
    \frac{v(t, \varepsilon) - v(t, 0)}{\varepsilon} \geq \frac{U(\varepsilon) - U(0)}{\varepsilon} \xrightarrow{\varepsilon \to 0} \infty,
\]
from the derivative of the power utility function \eqref{mining power utility}.
Hence, $\partial_x v(t, x) \to \infty$ as $x \to 0$ and
consequently, $\Delta v / \partial_x v$ is arbitrarily small in some neighborhood of $0$, because $\Delta v(t, 0) \leq v(t, r) - v(t, 0) = v(t, r) < \infty$.
Thus, by \eqref{base optimal hash rate}, there exists a $x_{\mathrm{off}}(t)$ such that zero rate mining is optimal for $x \leq x_{\mathrm{off}}$.

\section{Mean field games (of controls)}%
\label{app:mfg}
We here describe the ideas behind mean field games and the arguments leading to the continuum limit.
Most mean field games models exhibit interaction between the players through their state.
In contrast, the reward rate to each player in the proof-of-work mining game depends only on the hash rate of the population.
Consequently, the interaction between players is through their \emph{actions} as opposed to their state.
This is commonly referred to as \emph{extended mean field games} or \emph{mean field games of controls}.
Conceptually, the state and action interaction types of mean field games are very similar, but the structure of the resulting equations are very different.

Consider a game of $N$ players, each ($i \in 1,\dots,N$) with control $\alpha^i$ and state $X^i = X^{i, \alpha^i}$ described by the evolution
\begin{equation}
    \label{eqn:mfg_dynamics}
    \dif X^i_t = f\Bigl(\alpha^i_t, X^i_t, \smash{\sum_{j=1}^N} \alpha^j_t / N\Bigr) \dif t + \dif W^i_t, \quad X^i_0 = x^i,
\end{equation}
where $W^i$ is a Brownian motion representing player $i$'s idiosyncratic noise ($W^i$ and $W^j$ are independent for $i\neq j$).
Given a strategy profile $\alpha^{-i} = (\alpha^1, \dots, \alpha^{i-1}, \alpha^{i+1}, \dots, \alpha^N)$ of the other players,
player $i$ is optimizing some quantity $\E[U(X^{i, \alpha^i}_T)]$, and we assume that the optimizer $\alpha^{i,*}$ can be found in the class of Markovian controls, i.e., that it is a function of player $i$'s state (for fixed $\alpha^{-i}$).
In other words, we may write $\alpha^{i,*}_t = \alpha^*(t, X^i_t, \alpha^{-i}_t)$.

An equilibrium $t \mapsto \alpha_t = (\alpha^*(t, X^1_t, \alpha^{-1}_t), \dots, \alpha^*(t, X^N_t, \alpha^{-N}_t))$ of this form is characterized by the property that for all $i$, $t \mapsto \alpha^*(t, X^i_t, \alpha^{-i}_t)$ is an optimizer to player $i$'s optimization problem, i.e., no player has anything to gain from deviating.
We will not delve into detail about why these games become very difficult to solve, but the gist of it is that each player must solve an HJB equation that depends on every other player's solution, thus creating a system of $N$ coupled equations.
The idea of mean field games enters to circumvent this dimensionality issue.

The first step is to consider a sequence of games with increasingly many players.
Each additional player must be assigned an initial state (see state $x^i$ for player $i$ in \eqref{eqn:mfg_dynamics}).
This is done by sampling the initial states from a distribution (independent from $W^i$) with density $m_0$.
We now consider what happens to the problem in the limit $N \to \infty$.
Because the probability of two players starting at the same state is zero, we may index each player by their state $x^i$ instead of $i$.
In the limiting problem we simply write $x$ and remember that the collection of initial states $x$ is distributed according to $m_0$.
As time passes, the players' states evolve.
We denote by $m(t, \cdot)$ the density of players at time $t$.\footnote{This density does depend on the actions, but for now we omit this in the notation.}

For any strategy profile $\alpha = (\alpha^{x^1}, \dots, \alpha^{x^N})$, we consider the problem of player $x^i$.
The first observation is that if player $x^i$ deviates from $\alpha^{x^i}$, the impact on the other players is of order $1/N$, due to the averaging effect in \eqref{eqn:mfg_dynamics}.
In particular, as $N \to \infty$, we see that the effect of any one player on the others is vanishing, as
\[
    f\Bigl(\alpha^{x^i}_t, X^{x^i}_t, \smash{\sum_{j=1}^N} \alpha^{x^j}_t / N\Bigr)
    \longrightarrow
    f\Bigl(\alpha^{x^i}_t, X^{x^i}, \int_{\R} \alpha^{x^j}_t m(t,x^j) \dif x^j \Bigr).
\]
Hence, in the limit, player $x^i$'s action and state alone do not impact the population average.
Moreover, the dependence of player $x^i$ on the population is only through a statistical property: the mean.
Dropping the superscript, we may thus consider the dynamics of player $x$ given the mean control $\bar{\alpha}$ of the population:
\begin{equation}
    \label{eqn:mfg_continuum_dynamics}
    \dif X^{x, \alpha^x} = \dif X^x_t = f(\alpha^x_t, X^x_t, \bar{\alpha}_t) \dif t + \dif W^x_t, \quad X^x_0 = x.
\end{equation}
Player $x$ seeks to optimize $\E[U(X^x_T)]$, given the (mean) strategy profile $\bar{\alpha}$.
For any fixed $\bar{\alpha}$, this is a standard control problem, and we again write the optimizer as a function of the current state and the actions of the other players: $\alpha^*(t, X^x_t, \bar{\alpha}_t)$.

If every player chooses the action $\alpha^*_t = \alpha^*(t, X^x_t, \bar{\alpha}_t)$, we denote by $m(t, \cdot; \alpha^*) = m(t, \cdot; \alpha^*, m_0)$ the resulting density at time $t$.
The process $\bar{\alpha}$ is the equilibrium mean control if
\begin{equation}
    \label{eqn:mfg_equilibrium_cond}
    \bar{\alpha}_t = \int_{\R} \alpha^*(t, x, \bar{\alpha}_t) m(t, x; \alpha^*) \dif x.
\end{equation}
In other words, if all other players are using the strategy $\alpha^*$, then no players can improve their situation by deviating from this strategy.

Finally, given any process $\bar{\alpha}$ and a strategy $\alpha(t, x)$ shared by all players, the density $m(t, \cdot; \alpha)$ of the population can be shown to satisfy the Fokker--Planck equation
\begin{equation}
    \label{eqn:mfg_fokker-planck}
    \partial_t m(t, x; \alpha) - \partial_x (f(t, \alpha(t, x), x, \bar{\alpha}) m(t, x; \alpha)) - \frac{1}{2} \partial_{xx} m(t, x; \alpha) = 0, \quad m(0, \cdot, \alpha) = m_0.
\end{equation}
Furthermore, the value function $v$ of each player's optimization problem is characterized by the HJB equation
\begin{equation}
    \label{eqn:mfg_HJB}
    \partial_t v + \sup_{\alpha} f(t, \alpha, x, \bar{\alpha}) \partial_x v + \frac{1}{2} \partial_{xx} v = 0, \quad v(T, \cdot) = U.
\end{equation}
To summarize:
by considering the limit as $N \to \infty$, the system of $N$ coupled HJB equations reduces to the two coupled equations \eqref{eqn:mfg_fokker-planck} and \eqref{eqn:mfg_HJB} along with the equilibrium fixed point condition \eqref{eqn:mfg_equilibrium_cond}.
Although the structure of this smaller system is still mathematically very complex, it is often amenable to numerical computations, in contrast to the computational intractability of the $N$-player system.

\section{Stochastic reward \& exponential utility}
\label{sec:stoch_price}
We consider the base model of Section \ref{sec:model}, but when the reward $r$ is stochastically varying over time, driven, for instance, by the price of bitcoin. Similar to Section \ref{sec:liquid}, we can solve analytically for the equilibrium when the miners have exponential utility.
A miner's wealth process $X=(X_{t})_{t \geq t_0}$ follows:  
$dX_{t} = - \costr{\alpha_{t}}\,dt + r_t\,dN_{t}$, 
where now $r>0$ is modeled as a Markov process with infinitesimal generator $\lop_r$, whose increments are independent of the contemporaneous increments $dN$.
For instance, if $r$ is fit to a geometric Brownian motion model:
$dr_t= \mu r_t \,dt + \sigma r_t \,dW_t$, 
 where $W$ is an independent Brownian motion and $\sigma$ is the reward volatility, then
 $\lop_r = \frac12c_1^2r^2\partial_{rr} + c_0r\partial_r$.
 
Consider exponential utility miners, i.e., $U(x):=-\frac{1}{\gamma}e^{-\gamma x}$.
Then the value function of an individual miner, which is a function of $t$, $x$, and $r$, is defined by 
\begin{equation}
\label{base value function1}
v(t,x,r; \meanhashr) = \sup_{\alpha\geq0}\E[U(X_{T})|X_{t} = x, r_t=r].
\end{equation}
The associated HJB equation is
\begin{equation}
\label{base HJB1}
\partial_t v + \lop_r v+\sup_{\alpha\geq0}\left(-\costr{\alpha} \partial_x v + \frac{\K\alpha}{(\alpha+M\meanhashr_{t}(r))}\Delta v\right) = 0,
\end{equation}
with terminal condition $v(T,x,r) = U(x)$, and where 
$\Delta v= v(t,x+r,r;\meanhashr) - v(t,x,r;\meanhashr)$. 

Adapting the analysis of Section \ref{oneminerprob},
in equilibrium the optimal hash rate $\opthashr$ and $\eqmeanhashr$ are related by
\begin{equation}
    \opthashr(t, x, r; \eqmeanhashr) =
    -M\eqmeanhashr_t(r) + \sqrt{\frac{\K M \eqmeanhashr_{t}(r)\Delta v(t,x,r;\eqmeanhashr)}{c\partial_x v(t,x,r;\eqmeanhashr)}},
\label{base optimal hash rate1}
\end{equation}
and the HJB equation can be written as:
\begin{equation}
\partial_t v + \lop_r v + \left(\sqrt{Mc\eqmeanhashr_t(r) \partial_x v}-\sqrt{\K\Delta v}\right)^{2} = 0.
\label{base HJB after optimization1}
\end{equation}

With exponential utility miners, we have $v(T,x,r) = U(x):=-\frac{1}{\gamma}e^{-\gamma x}$. 
With the ansatz $v(t,x,r)=g(t,r)U(x)$, we find that $g$ must solve the linear PDE problem
 $$ \partial_{t}g + \lop_r g- \gamma \left(\sqrt{Mc\eqmeanhashr_t(r)}-\sqrt{\K w(r;\gamma)}\right)^{2}g = 0, \qquad g(T,r)=1, $$
where $w$ was defined in \eqref{wdef}.
The ansatz implies that $\Delta v$/$\partial_x v = w(r; \gamma)$ does not depend on $x$.
Then, as in the proof of Proposition \ref{base case exponential utility},
all miners are active and so
	$$\eqmeanhashr_t(r) = \frac{\K M}{c(1+ M)^{2}}w(r;\gamma). $$
This yields the linear PDE problem for $g$:
		\begin{equation}
	\partial_{t}g +\lop_r g - \gamma\frac{\K w(r;\gamma)}{(1+M)^{2}}\,g = 0,
	\end{equation}
	with terminal condition $g(T,r) = 1$.
The solution can be written as a Feynman--Kac expectation:	
	\[ g(t,r) = \E\left[\exp\left(-\gamma\int_t^T\frac{\K w(r_s;\gamma)}{(1+M)^{2}}\,ds\right) \mid r_t=r\right],
	\]
	but is not needed for the optimal mining rate, which is given by
\begin{equation}
	\label{exponential optimal rate1}
	\alpha^{*}(t, x, r; \eqmeanhashr) \equiv \eqmeanhashr_t(r) \equiv \frac{\K M}{c(1+M)^{2}} w(r;\gamma).
	\end{equation}
	The individual reward rate is
$\rewardr_t \equiv \frac{1}{D(1+M)}$,
which does not depend on $r$.
So, even as the price appreciates or falls, the hash rates go up and down respectively, following the formula \eqref{exponential optimal rate1}, but the individual rewards rates remain the same.
The reward affects the value function as higher reward gives higher reward, but reward volatility $\sigma$ 
does not enter into the equilibrium hash rates \eqref{exponential optimal rate1}, which are simply reactive to the present reward.
	
\section{Mining pools\label{pools}}
An important property of mining pools is their use for risk sharing. 
Miners may join pools to reduce risk by sharing rewards among all pool contributors, in some proportion to their contribution.
Each miner thus receives smaller but more frequent rewards,\footnote{%
  Some pools even employ a system in which members are paid for hash contributions regardless of reward arrival, thus moving all risk to the pool.
  Such pools charge higher fees to offset this risk.
  As mentioned towards the end of this section, such a structure fits well into the analysis in the main text.
}
thereby reducing the risk associated with reward arrival.
In fact, a miner may join multiple pools and contribute hashes to the different pools.
This enables further risk reduction.

However, pool miners no longer have the power to select which transactions to process as part of their blocks.\footnote{The decentralized mining pool P2Pool is an example of an exception to this.}
Instead the block data---the `work'---is handed down to them by pool operators.
So even if the risk sharing mechanism of mining pools improves the resilience of the blockchain to outside attacks, it does come at a cost to decentralization, and thus lesser resilience to malicious behavior among its participants.

We here consider a model with pools and miners who desire the risk sharing offered by pools.
The decision making problem of pools becomes a problem very similar to that studied in the paper, suggesting that similar competition and centralization effects would appear on a pool level.

There exists a variety of reward distribution schemes, fee structures, and other features that differentiate pools.
To not overcomplicate the model, we consider pools that distribute rewards to their contributors proportional to their hash rate contribution.\footnote{%
  More commonly, pools use a `Pay Per Last N Shares' (PPLNS) system that discourages contributing to multiple pools.
  Such a system would see more centralization than the model set up here.
}
We also assume that all pools are public, i.e., anybody may join them, and that all miners who are not operating a pool distribute their contributions across pools to reduce risk.
There are thus two groups of miners: pool operators and individual miners.

The assumptions that all pools are public and that everybody mines for pools allows individual miners perfect risk sharing, except for the inherent randomness of the arrival of the next block.
This is accomplished by mining for each pool in proportion to the pool's total contribution to the global hash rate.
We denote by $\alpha^{\text{ind}}$ the aggregate hash rate of the individual miners, and  
by $\alpha^p$ the hash rate of the operator of pool $p = 1,\dots,P$.
The hash rate contribution of individual miners to pool $p$ is then
\[
  \alpha^{\text{ind}} \frac{\alpha^p}{\sum_{p'=1}^P \alpha^{p'}}.
\]
Given the individual miners' contributions to the pool, each pool's hash rate is
\[
  \alpha^p + \alpha^{\text{ind}}\frac{\alpha^p}{\sum_{p'=1}^P \alpha^{p'}} = \alpha^p \Big(1 + \alpha^{\text{ind}}\frac{1}{\sum_{p'=1}^P \alpha^{p'}}\Big),
\]
and the aggregate is
\[
  \sum_{p=1}^P \alpha^p \Big(1 + \alpha^{\text{ind}}\frac{1}{\sum_{p'=1}^P \alpha^{p'}}\Big).
\]
A pool's block arrival rate is (proportional to) the ratio of the two which, after simplification, is
\[
  \frac{\alpha^p \Big(1 + \alpha^{\text{ind}}\frac{1}{\sum_{p'=1}^P \alpha^{p'}}\Big)}{\sum_{p'=1}^P \alpha^{p'} (1 + \alpha^{\text{ind}}\frac{1}{\sum_{p'=1}^P \alpha^{p'}})}
  = \frac{\alpha^p}{\sum_{p'=1}^P \alpha^{p'}} = \frac{\alpha^p}{\alpha^p + \sum_{\substack{p'=1\\p'\neq p}}^P \alpha^{p'}}.
\]
Hence, when individual miners strive for risk sharing, the reward arrival for pools has the same structure as without pools.

Suppose a pool charges a fee $\psi r$ when a block is mined.
The pool's reward is then
\begin{align*}
  r (\text{share in pool}) + r \psi (\text{share of participants})
  &=r \frac{\alpha^{p}}{\alpha^p + \alpha^{\text{ind}} \frac{\alpha^p}{\sum_{p'=1}^P \alpha^{p'}}}
  + r \psi \frac{\alpha^{\text{ind}} \frac{\alpha^p}{\sum_{p'=1}^P \alpha^{p'}}}{\alpha^p + \alpha^{\text{ind}} \frac{\alpha^p}{\sum_{p'=1}^P \alpha^{p'}}} \\
  &= r \Big(1 + (\psi - 1) \frac{\alpha^{\text{ind}}}{\sum_{p'=1}^P \alpha^{p'} + \alpha^{\text{ind}}}\Big).
\end{align*}
This reflects that the pool operator must share the reward with participants in proportion to the relative hash contributions.

We thus have a model for the pool manager's wealth as
\[
  \dif X^p_t = - c \alpha^p_t \dif t + \tilde{r}^p_t \dif N^p_t,
\]
where $N^p_t$ has intensity
\[
  \rewardr^p = \frac{\alpha_t^p}{\alpha_t^p + \sum_{\substack{p'=1\\p'\neq p}}^P \alpha_t^{p'}}
  \approx
  \frac{\alpha_t^p}{\alpha_t^p + (P - 1) \bar{\alpha}_t}
\]
and
\[
  \tilde{r}_t^p = r \Big(1 + (\psi - 1) \frac{\alpha_t^{\text{ind}}}{\sum_{p'=1}^P \alpha_t^{p'} + \alpha_t^{\text{ind}}}\Big)
  \approx r \Big(1 + (\psi - 1) \frac{\alpha_t^{\text{ind}}}{\alpha_t^p + (P - 1) \bar{\alpha}_t + \alpha_t^{\text{ind}}}\Big),
\]
where $\bar{\alpha}$ is the continuum approximation of the average pool manager hash rate.

The continuum mean field game version is to consider a representative pool manager with initial wealth $X_0 = x$ and hash rate $\alpha_t$, and wealth evolving according to
\begin{equation}\label{eqn:pool_mfg}
  \dif X_t = - c \alpha_t \dif t + \tilde{r}_t \dif N_t,
  \quad
  \rewardr_t = \frac{\alpha_t}{\alpha_t + (P - 1) \bar{\alpha}_t},
  \quad
  \tilde{r}_t = r \Big(1 + (\psi - 1) \frac{\alpha_t^{\text{ind}}}{\alpha_t + (P - 1) \bar{\alpha}_t + \alpha_t^{\text{ind}}}\Big).
\end{equation}
The pool game \eqref{eqn:pool_mfg} differs slightly from our model in Section~\ref{general_structure}, where $r$ is constant.
However, because in the expression for $\tilde{r}$, $\alpha$ only appears in the denominator, along with both $\alpha^{\text{ind}}$ and all other pool operators' hash rates, the representative operator has very little control over the reward size.
Thus, the choice of $\alpha$ primarily impacts the reward arrival rate $\rewardr$, where it appears also in the numerator.
In other words, this model is very similar to that studied in this paper and can be treated in the same way.

The similarity of this model to that without pools suggests the competition and centralization would be similar among pool operators after taking pools into account.
In particular, the previous results suggest the emergence of dominant pools, which we also observe in practice.
This section assumes a proportional distribution of rewards to participants.
If the pools instead employ a pay per share (PPS) system, which means miners receive a guaranteed payment per hash, pools essentially buy hashes at a cost equal to that payment.
This cost can be interpreted as $c$ in our model, which leads to a game even more similar to that in the main text.

Of course, a more complete model of pools might include competition in setting the fee $\psi$.
For further discussion and analysis of pooled mining in a static setting, we refer to \cite{NBERw25592}.

\section{Cost heterogeneity}
\label{sec:cost_heterogeneity}
Recall that in the model of Section~\ref{sec:liquid}, when the individual miners have CARA utility
$\Delta v / \partial_x v$ is constant.
Consequently, $\alpha^*$ in \eqref{base optimal hash rate} becomes
\begin{equation}
\label{alphastar}
\alpha^{*} = \left\{
\begin{aligned}
&-M\meanhashr + \sqrt{\frac{w(r;\gamma)\K M\meanhashr}{c}}, ~~~&&\text{if}~~~ \meanhashr<\frac{w(r;\gamma)\K}{cM}, \\
&0, &&\text{otherwise},
\end{aligned}
\right.
\end{equation}
with $w$ defined in \eqref{wdef},
which is independent of wealth.

Suppose now that the continuum of miners have heterogeneous costs.
In Section~\ref{oneminerprob} miners were indexed by their wealth $x$, whereas here the miners are characterized by their wealth $x$ and the parameter $c$ indexing their cost.
Because of the wealth-independence of the optimal hash rates described above, we may consider only $c$ when indexing the miners.
For simplicity, consider costs $c$ in some bounded interval $(c_0, c_1) \subset \R_+$.
Denote by $f_c(c)$ the associated density of miners.
Then, the individual miners hash at rates \eqref{alphastar}.
We use the notation $\alpha^*(c)$ to stress the dependence on $c$.

We see from \eqref{alphastar} that
a miner is active if its cost
$$ c<\frac{w(r;\gamma)\K}{\meanhashr M}.$$
Define $c_{\max} = \min\{ \frac{w(r;\gamma)\K}{\meanhashr M}, c_1 \}$.
By this definition and \eqref{alphastar}, miners with costs $c < c_{\max}$ are active, while the rest do not mine.
Then, averaging \eqref{alphastar} over active miners, i.e., over $c < c_{\max}$ with respect to the density $f_c(c)$, leads to the following equation for $\meanhashr$:
\begin{equation}
  \label{eqn:alphabarz}
\meanhashr = \int_{c<c_{\max}}\left(-M\meanhashr + \sqrt{\frac{w(r;\gamma)\K M\meanhashr}{c}}\right)f_c(c)\,dc.
\end{equation}

In a case where all miners are active, i.e., $c_{\max} = c_1$, we can solve \eqref{eqn:alphabarz} to find
$$ \meanhashr = \frac{w(r;\gamma)\K M}{\bar c(M+1)^2}. $$
where $\bar c$ is the inverse square root averaged cost
$$ \frac{1}{\sqrt{\bar c}} = \int_{c_0}^{c_1} \frac{1}{\sqrt{c}}f_c(c)\,dc. 
$$
Again using \eqref{alphastar}, the condition for a miner to be active is therefore
$$ c < \left(\frac{M+1}{M}\right)^2\bar c,$$
and so costs have to be increasingly homogeneous for larger $M$ for everyone to be active.
The cost distribution may have a small tail to the left, but on the right miners would not participate.
This suggests that homogeneous costs, possibly with few cost-advantaged miners, is the right place to start, as we did in this paper.

\end{appendices}

	\bibliographystyle{apacite}
\small
\bibliography{paper}

\end{document}